\numberwithin{equation}{section}
\crefname{Thm}{Theorem}{Theorems}
\crefname{Rem}{Remark}{Remarks}
\crefname{Prop}{Proposition}{Propositions}
\newtheorem{Cor}[equation]{Corollary}
\newtheorem{Lem}[equation]{Lemma}
\newtheorem{Prop}[equation]{Proposition}
\newtheorem{Thm}[equation]{Theorem}
\theoremstyle{remark}
\newtheorem{Def}[equation]{Definition}
\newtheorem{Exa}[equation]{Example}
\newtheorem{Rem}[equation]{Remark}
\newtheorem{Rec}[equation]{Recollection}
\newtheorem{Cons}[equation]{Construction}
\newcommand{\nc}{\newcommand}
\nc{\dmo}{\DeclareMathOperator}
\dmo{\Chain}{Ch}
\dmo{\CInvname}{\chi}
\dmo{\cone}{cone}
\dmo{\Der}{D}
\dmo{\DPerm}{DPerm}
\dmo{\Hm}{H}
\dmo{\Hom}{Hom}
\dmo{\Id}{Id}
\dmo{\id}{id}
\dmo{\Img}{Im}
\dmo{\Ind}{Ind}
\dmo{\Infl}{Infl}
\dmo{\Ker}{Ker}
\dmo{\Komp}{K}
\dmo{\Locname}{Loc}
\dmo{\modname}{mod}%
\dmo{\Mod}{Mod}
\dmo{\noeth}{noeth}
\dmo{\opname}{op}
\dmo{\perm}{perm}
\dmo{\Perm}{Perm}
\dmo{\Res}{Res}
\dmo{\rmH}{H}
\dmo{\sing}{sing}
\dmo{\smallb}{b}
\dmo{\smallperf}{perf}
\dmo{\Spec}{Spec}
\dmo{\StabD}{\mathsf{sing}}
\dmo{\thick}{thick}
\nc{\ACAT}[2]{\Komp\Inj_{\perm}(#1;#2)}
\nc{\ac}{\mathrm{ac}}
\nc{\adj}{\dashv}
\nc{\aka}{{a.\,k.\,a.}\ }
\nc{\barFplus}{\barF^{\scriptscriptstyle+}}
\nc{\barF}{\bar{\Permtomod}}
\nc{\bbF}{\mathbb{F}}
\nc{\bbZ}{\mathbb{Z}}
\nc{\cat}[1]{\mathscr{#1}}
\nc{\cA}{\cat{A}}
\nc{\cc}{\mathsf{c}}
\nc{\CInvGR}{\CInv{G}{\CR}}
\nc{\CInvHR}{\CInv{H}{\CR}}
\nc{\CInv}[2]{\CInvname^{#1}} 
\nc{\cJ}{\cat{J}}
\nc{\colim}{\mathop{\mathrm{colim}}}
\nc{\CRG}{\CR G}
\nc{\CR}{R}
\nc{\Dbs}{\Db^{\sing}}
\nc{\Db}{\Der_{\smallb}}
\nc{\Dperf}{\Der_{\smallperf}}
\nc{\DPermGR}{\DPerm(G;\CR)}
\nc{\DpermGR}{\Dperm{G}{\CR}}
\nc{\Dperm}[2]{\Der_{\perm}(#1;#2)}
\nc{\DRperf}{\Der_{\kern-0.1em\CR\text{-}\kern-0.1em\smallperf}(\CRG)}
\nc{\Dsing}{\Der^{\sing}}
\nc{\eg}{{\sl e.g.}\@\xspace}
\nc{\Fplus}{\Permtomod^{\scriptscriptstyle+}}
\nc{\Fp}{\bbF_{\!p}}
\nc{\hook}{\hookrightarrow}
\nc{\ideal}[1]{\langle #1\rangle}
\nc{\ie}{{\sl i.e.}\@\xspace}
\nc{\ihom}{{\mathsf{hom}}} 
\nc{\Inj}{\mathrm{Inj}}
\nc{\into}{\mathop{\rightarrowtail}}
\nc{\isoto}{\overset{\sim}{\,\to\,}}
\nc{\Kac}{\Komp_{\mathrm{ac}}}
\nc{\Kbac}{\Komp_{\smallb,\mathrm{ac}}}
\nc{\Kb}{\Komp_{\smallb}}
\nc{\kk}{k}
\nc{\loccit}{{\sl loc.\ cit.}\xspace}
\nc{\Loc}[1]{\Locname(#1)}
\nc{\Lotimes}{\otimes^{\rmL}}
\nc{\lto}{\leftarrow}
\nc{\Mid}{\,\big|\,}
\nc{\MMod}[1]{\Mod(#1)}%
\nc{\mmod}[1]{\modname(#1)}%
\nc{\onto}{\mathop{\twoheadrightarrow}}
\nc{\op}{^{\opname}}
\nc{\permGR}{\perm(G;\CR)}
\nc{\Permtomod}{\Upsilon}
\nc{\ppermutation}{$\natural$-permutation\xspace}
\nc{\qquadtext}[1]{\qquad\textrm{#1}\qquad}
\nc{\restr}[1]{_{|_{\scriptstyle #1}}}
\nc{\rmh}{\mathsf{h}}
\nc{\rmL}{\mathsf{L}}
\nc{\rmR}{\mathsf{R}} 
\nc{\SET}[2]{\big\{\,#1\Mid#2\,\big\}}
\nc{\too}{\mathop{\longrightarrow}\limits}
\nc{\To}{\Rightarrow}
\nc{\unit}{\mathbb{1}}
\nc{\xinto}[1]{\overset{#1}{\,\into\,}}
\nc{\xonto}[1]{\overset{#1}{\,\onto\,}}
\nc{\xto}[1]{\xrightarrow{#1}}
\date{\today}
\author{Paul Balmer}
\address{Paul Balmer, UCLA Mathematics Department, Los Angeles, CA 90095-1555, USA}
\email{balmer@math.ucla.edu}
\urladdr{https://www.math.ucla.edu/~balmer}
\author{Martin Gallauer}
\address{Martin Gallauer, Max-Planck-Institut f\"ur Mathematik, 53111 Bonn, Germany}
\email{gallauer@mpim-bonn.mpg.de}
\urladdr{https://guests.mpim-bonn.mpg.de/gallauer}
\begin{document}


\title[permutation modules and cohomological singularity]{permutation modules and\\cohomological singularity}

\begin{abstract}
We define a new invariant of finitely generated representations of a finite group, with coefficients in a commutative noetherian ring.
This invariant uses group cohomology and takes values in the singularity category of the coefficient ring.
It detects which representations are controlled by permutation modules.
\end{abstract}

\subjclass[2020]{}
\keywords{Modular representation theory, permutation modules, trivial source modules, singularity category, group cohomology, derived categories}

\thanks{First-named author supported by NSF grant~DMS-1901696.
The authors would like to thank the Isaac Newton Institute for Mathematical Sciences for support and hospitality during the programme \textit{K-theory, algebraic cycles and motivic homotopy theory} when work on this paper was undertaken. This programme was supported by EPSRC grant number EP/R014604/1.}

\maketitle

\begin{flushright}
Dedicated to Henning Krause \\ on the occasion of his 60th birthday
\end{flushright}
\bigbreak

\tableofcontents

\section{Introduction}
\label{sec:intro}%


\emph{In the whole paper, $G$ is a finite group and $\CR$ is a commutative noetherian ring.}

\medbreak

Let $M$ be a finitely generated $\CRG$-module.
In this article we define an invariant which measures how singular the cohomology of~$M$ is.
It allows us to conclude the theme of~\cite{balmer-gallauer:resol-small}, where we started exploring how much of the $\CR$-linear representation theory of~$G$ is controlled by \emph{permutation modules} (\Cref{Rec:perm}).
One motivation is that general $\CRG$-modules are typically wild, whereas permutation ones are much simpler.
Here we prove a precise version of the following slogan:
\begin{center}
The $\CRG$-module $M$ is controlled by permutation modules \\ if and only if its cohomology is not singular.
\end{center}
In the remainder of the introduction we describe the invariant and make this statement precise.

\medbreak
It will be convenient to view $M$ as an object of $\Db(\CRG)$, the bounded derived category of finitely generated $\CRG$-modules.
We consider the thick triangulated subcategory of~$\Db(\CRG)$ generated by finitely generated permutation modules
\begin{equation}
\label{eq:Dperm}%
\DpermGR=\thick\left\{\CR(G/H)\mid H\le G\right\}
\end{equation}
as the part of~$\Db(\CRG)$ that is `controlled by permutation modules'.
This interpretation is justified by our result in~\cite{balmer-gallauer:resol-small} that the
canonical functor which sends a complex of permutation modules to itself viewed in the derived category
\begin{equation}
\label{eq:Kb->Db}%
\Permtomod\colon \Kb(\perm(G;\CR))\too \Db(\CRG)
\end{equation}
is essentially a localization onto this~$\DpermGR$.
More precisely, $\Permtomod$ induces, after quotienting-out its kernel and idempotent-completing~$(\ldots)^{\natural}$, an equivalence
\begin{equation}
\label{eq:Kb/Kbac=Dperm}%
\bar{\Permtomod}\colon\left(\frac{\Kb(\perm(G;\CR))}{\Kbac(\perm(G;\CR))}\right)^\natural\xto{\sim}\DpermGR.
\end{equation}
The announced invariant will be a functor defined on $\Db(\CRG)$ which vanishes exactly on $\DpermGR$.

To define it, recall the \emph{(small) singularity category}~\cite{orlov:singularities}
\[
\Dbs(\CR)=\frac{\Db(\CR)}{\Dperf(\CR)}
\]
which measures how far the ring~$\CR$ is from being regular. See also Stevenson~\cite{stevenson:singularity}.
Since the cohomology of~$M$ is typically unbounded, we will also require the \emph{`big' singularity category} $\Dsing(\CR)$, following Krause~\cite{krause:stabX}.
It is a compactly-generated triangulated category, whose subcategory of compact objects coincides with the idempotent-completion of the above~$\Dbs(\CR)$.
Krause extends the evident quotient functor $\Db(\CR)\onto \Dbs(\CR)$ to a functor defined on unbounded complexes of arbitrary modules.
We call this extension the \emph{singularity functor}
\[
\StabD\colon \Der(\CR)=\Der(\MMod{\CR})\too \Dsing(\CR).
\]

\smallskip

For each subgroup~$H\le G$, let $(-)^{\rmh H}\colon\Der(\CRG)\to \Der(\CR)$ be the right-derived functor of the $H$-fixed-points functor $(-)^H$.
We can now state our main result.
\begin{Thm}[\Cref{Thm:main}]
\label{Thm:main-intro}%
The subcategory $\DpermGR$ of~$\Db(\CRG)$,\break given in~\eqref{eq:Dperm}, consists of those complexes~$X\in\Db(\CRG)$ such that the invariants
\begin{equation}
\label{eq:CInv-intro}%
\CInvHR(X):=\StabD(X^{\rmh H})
\end{equation}
vanish in the big singularity category~$\Dsing(\CR)$, for every subgroup~$H\le G$.
\end{Thm}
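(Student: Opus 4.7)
The plan is to establish the biconditional by separating the ``forward'' containment $\DpermGR \subseteq \bigcap_{H} \Ker(\CInvHR)$ from the ``converse'' detection statement. For the forward direction, since $\CInvHR = \StabD \circ (-)^{\rmh H}$ is triangulated and $\DpermGR$ is by definition the thick subcategory generated by $\{\CR(G/H')\}_{H'\le G}$, it suffices to verify $\CInvHR(\CR(G/H')) = 0$ in $\Dsing(\CR)$ for every pair $H, H' \le G$. Applying the Mackey formula to $\Res_H^G \Ind_{H'}^G \CR$ decomposes
\[
\CR(G/H')^{\rmh H} \simeq \bigoplus_{g \in H\backslash G/H'} \CR^{\rmh(H\cap {}^{g}H')}
\]
in $\Der(\CR)$, so the problem reduces to showing $\StabD(\CR^{\rmh K}) = 0$ for every subgroup $K \le G$. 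I would compute $\CR^{\rmh K}$ via a resolution of the trivial $\CR K$-module by finitely generated permutation $\CR K$-modules; applying $(-)^K$ yields a complex of finitely generated free $\CR$-modules, and the desired vanishing in the big singularity category should follow from Krause's description of $\Dsing(\CR)$ and the fact that K-projective complexes of projectives lie in the kernel of~$\StabD$.

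For the converse, given $X \in \Db(\CRG)$ with $\CInvHR(X) = 0$ for every $H \le G$, the goal is to produce a bounded complex of permutation modules $C$ with $\Permtomod(C) \simeq X$ in $\Db(\CRG)$ (up to direct summand), which by the equivalence~\eqref{eq:Kb/Kbac=Dperm} places $X$ in $\DpermGR$. I would build such a $C$ inductively: choose a partial approximation $\Permtomod(C_n) \to X$, observe that its cone $Y_n$ still satisfies $\CInvHR(Y_n) = 0$ for all $H$ (combining the forward direction for $\Permtomod(C_n) \in \DpermGR$ with the hypothesis on $X$), and use the vanishing data as input to an obstruction-theoretic step that attaches an additional permutation module killing part of the complexity of $Y_n$. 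A well-founded complexity measure, exploiting the finiteness of the subgroup lattice of $G$ together with the Noetherian hypothesis on $\CR$, should then ensure termination after finitely many steps.

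The main obstacle lies in this converse direction: translating the abstract vanishing of the invariants $\CInvHR$ into an explicit permutation resolution. The forward calculation is essentially a Mackey-type reduction to the triviality of $\StabD(\CR^{\rmh K})$. By contrast, detection requires a genuine construction whose existence is governed precisely by the cohomological data; I expect this step to rest on interpreting $\CInvHR$ as the obstruction layer in a tower built from the recollement defining $\Dsing(\CR)$, combined with the structural results on $\DpermGR$ from \cite{balmer-gallauer:resol-small}.
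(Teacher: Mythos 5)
Your forward direction has a genuine gap. First, a permutation resolution of the trivial $\CR K$-module is not adapted to computing the \emph{right}-derived functor $(-)^{\rmh K}$: permutation modules are generally neither projective nor injective, so applying $(-)^K$ (or dualizing) termwise to such a resolution will not give $\CR^{\rmh K}$. (Concretely, over $\kk C_p$ the complex $\kk\xto{\id}\kk$ is a resolution by permutation modules, yet dualizing gives an acyclic complex, certainly not $\kk^{\rmh C_p}$.) Second, and more seriously, the principle you invoke --- that K-projective complexes of projectives lie in $\Ker(\StabD)$ --- is false: \emph{every} object of $\Der(\CR)$ is quasi-isomorphic to a K-projective complex of projectives, so if this principle were true the big singularity functor would vanish identically. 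The object $\CR^{\rmh K}$ is a \emph{left}-bounded, typically unbounded-to-the-left, complex, and such complexes need not be in $\Ker(\StabD)$. The actual proof of $\StabD(\CR^{\rmh K})=0$ (\Cref{Lem:chi-necessary}) uses criterion~\eqref{it:Ker-sing-iii} of \Cref{Lem:Ker-sing} together with the approximation theorem~\cite[Corollary~5.3]{balmer-gallauer:resol-small}: there is a tower of bounded $\natural$-permutation complexes $Q(n)$, degreewise eventually stationary, whose limit is a genuine $\CR G$-projective resolution of $\CR\oplus\Omega\CR$, and dualizing produces $\CR$-perfect complexes $Q(n)^\dag$ through which every map from a bounded complex factors. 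This approximation result is the crux of the forward direction, and your sketch does not engage with it.

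Your converse direction is too vague to constitute a proof, and it does not resemble the argument the paper uses. The paper first reduces to Sylow $p$-subgroups, and there invokes~\cite[Corollary~2.26]{balmer-gallauer:resol-small} to split $X\oplus\Sigma X$ into a bounded permutation complex $P$ and a $p$-torsion piece $T$. The remaining work is to show the $p$-torsion, $G$-cohomologically perfect complex $T$ lies in $\thick(\CR)$; this is done by lifting the comonad $\Infl_1^G\circ(-)^{\rmh G}$ to Krause's homotopy category $\Komp(\Inj(\CRG))$, where the bounded derived category is the compact part, and combining a descent/conservativity lemma for $p$-torsion objects (relying on $\Fp G\in\thick(\Fp)$ in $\Db(\Fp(G\times G\op))$) with Neeman's compactness argument to pass from localizing to thick subcategories. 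By contrast, your inductive scheme --- choose a partial permutation approximation $\Permtomod(C_n)\to X$, pass to the cone, and ``attach a permutation module killing part of the complexity'' guided by a ``well-founded complexity measure'' --- never explains what the complexity measure is, why the vanishing of $\CInvHR$ in $\Dsing(\CR)$ (a statement in an entirely different category) produces the needed permutation cell, or why the process terminates. These are precisely the points where the substance of the proof lies; you have correctly identified the difficulty but not resolved it.

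Your Mackey reduction in the forward direction is a reasonable reorganization of \Cref{Cor:CInv-Ind-Res} and \Cref{Prop:chi-necessary}; this part matches the paper in spirit.
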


The functor $(-)^{\rmh G}\colon\Der(\CRG)\to \Der(\CR)$ is the derived-category version of ordinary group cohomology, that is, the following left-hand square commutes:
\begin{equation}
\label{eq:intro}%
\vcenter{\xymatrix@C=4em{
\MMod{\CRG}\ \ar@{^(->}[r] \ar[d]_-{\rmH^*(G,-)}
& \Der(\CRG) \ar[d]_-{(-)^{\rmh G}} \ar[rd]^-{\CInvGR}_-{\scriptscriptstyle\textrm{(def)}}
\\
\MMod{\CR}
& \Der(\CR) \ar[r]_-{\StabD} \ar[l]^-{\rmH^\ast\,(=\rmH_{-\ast})}
& \Dsing(\CR).\!
}}
\end{equation}
For every subgroup $H\le G$, we call the invariant appearing in~\eqref{eq:CInv-intro}
\begin{equation}
\label{eq:coh-sing-functor}
\CInvHR\colon \Der(\CRG)\xto{\Res^G_H}\Der(\CR H)\xto{(-)^{\rmh H}} \Der(\CR)\xto{\StabD} \Dsing(\CR)
\end{equation}
the \emph{$H$-cohomological singularity} functor. See \Cref{sec:inv}.

To apply \Cref{Thm:main-intro} to a complex~$X\in\Db(\CRG)$ whose underlying complex of $\CR$-modules $\Res^G_1X$ is already perfect it suffices to test $\CInvHR(X)=0$ for the Sylow subgroups~$H$ of~$G$, or alternatively for the (maximal) elementary abelian subgroups. (See~\Cref{Rec:ImF-stability}.) In particular, if $G$ is a $p$-group, there are two conditions for a complex~$X\in\Db(\CRG)$ to belong to $\DpermGR$: the na\"{i}ve $\Res^G_1(X)\in\Dperf(\CR)$ and the new $\CInvGR(X)=0$. See \Cref{sec:main}.

To appreciate the strength of \Cref{Thm:main-intro}, observe that for $\CR$ regular the condition $\CInvHR(X)=0$ is trivially true in~$\Dsing(\CR)=0$.
Thereby we recover the non-trivial fact that $\Permtomod$ in~\eqref{eq:Kb->Db} is surjective-up-to-summands when $\CR$ is regular.

\medskip
The article is organized as follows.
In \Cref{sec:background} we explain our conventions and recall the singularity functor.
In \Cref{sec:inv} we define the invariant~$\CInv{H}{}$ and prove that the objects~$X$ of $\DpermGR$ satisfy $\CInv{H}{}(X)=0$.
In \Cref{sec:main} we prove the converse, namely \Cref{Thm:main-intro}.


\section{Recollections and preparations}
\label{sec:background}%


We recall basic notation and other conventions, mostly following~\cite{balmer-gallauer:resol-small}. Then we remind the reader of the singularity category of a ring. Beyond this, Krause's recent monograph~\cite{krause:H-book} provides general background on Grothendieck categories and on representation theory of finite groups.

\subsection*{Conventions}

Unless specified, modules are left modules. We denote the category of $\Lambda$-modules by $\MMod{\Lambda}$ and its subcategory of finitely generated ones by~$\mmod{\Lambda}$.

Since fixed points~$(-)^H$ and other decorations (duals) appear in the exponent, we use homological notation for complexes $\cdots \to M_n\to M_{n-1}\to \cdots$.
We write $\Chain_?$, $\Komp_?$, and $\Der_?$ for, respectively, the category of chain complexes, its homotopy category and its derived category, with $?\in\{\emptyset,\mathrm{b},+,-\}$ indicating boundedness conditions, as usual.
We abbreviate $\Db(\Lambda)$ for $\Db(\mmod{\Lambda})$ and $\Der(\Lambda)$ for~$\Der(\MMod{\Lambda})$.
When we speak of a module as a complex, we mean it concentrated in degree zero.

All triangulated subcategories are implicitly assumed to be replete (closed under isomorphisms). We abbreviate `thick' for `triangulated and thick' (\ie closed under direct summands). A triangulated subcategory is called localizing if it is closed under coproducts.
We write $\thick(\cA)$ (respectively, $\Loc{\cA}$) for the smallest thick (respectively, localizing) subcategory containing~$\cA$.

For an additive category~$\cat{A}$, we denote by $\cA^\natural$ its idempotent-completion (\aka Karoubi envelope). Recall from~\cite{balmer-schlichting:idempotent-completion} that $\Kb(\cA^\natural)\cong\Kb(\cA)^\natural$.
\begin{Rec}
\label{Rec:perm}%
If $A$ is a left $G$-set, we denote by $\CR(A)$ the free $\CR$-module with $G$-action extended $\CR$-linearly from its basis~$A$. An $\CRG$-module is called a \emph{permutation module} if it is isomorphic to $\CR(A)$ for some $G$-set~$A$. The additive category of permutation modules is denoted by $\Perm(G;\CR)$ and its subcategory of finitely generated ones by~$\perm(G;\CR)$.
\end{Rec}

\begin{Rec}
\label{Rec:tensor-R}%
\label{Rec:scalar-extension}%
We tensor $\CRG$-modules over~$\CR$ and use diagonal $G$-action:
\[
-\otimes_{\CR}-\colon\MMod{\CRG}\times\MMod{\CRG}\too\MMod{\CRG\otimes_{\CR}\CRG}\too\MMod{\CRG}.
\]
This tensor is  right-exact in each argument and can be left-derived as usual:
\[
-\Lotimes_{\CR}-\colon\Der_+(\CRG)\times\Der_+(\CRG)\too\Der_+(\CRG).
\]
If either $X$ or~$Y\in\Chain_+(\CRG)$ is degreewise $\CR$-flat, we have $X\Lotimes_{\CR}Y\cong X\otimes_{\CR}Y$.
We will also use the scalar-extension functor $\CR'\otimes_{\CR}-\colon\MMod{\CRG}\too\MMod{\CR' G}$
as well as its derived version~$\CR'\Lotimes_{\CR}-\colon\Der_+(\CRG)\to\Der_+(\CR' G)$.
It is easy to see that those functors preserve perfect complexes and complexes of permutation modules, and send $\CR$-perfect complexes to $\CR'$-perfect ones (see \Cref{Rec:R-perfect}).
\end{Rec}

\begin{Rec}
\label{Rec:image-Fbar}%
As mentioned in the introduction, the main object of~\cite{balmer-gallauer:resol-small} was the canonical tensor-triangulated functors $\Permtomod$ of~\eqref{eq:Kb->Db} and the induced functor
\[
\bar{\Permtomod}\colon\left(\frac{\Kb(\perm(G;\CR))}{\Kbac(\perm(G;\CR))}\right)^\natural\too\Db(\CRG).
\]
We proved~\cite[Theorem~4.3]{balmer-gallauer:resol-small} that $\bar{\Permtomod}$ is fully faithful, and consequently that its essential image is~$\DpermGR:=\Img(\bar{\Permtomod})=\thick(\perm(G;\CR))$ in~$\Db(\CRG)$ as in~\eqref{eq:Dperm}.
(\footnote{\,In~\cite{balmer-gallauer:resol-small}, this image $\Img\bar{\Permtomod}$ was denoted by both $\cat{P}(G;\CR)^\natural$ and $\cat{Q}(G;\CR)^\natural$ and we described its objects as those $X\in\Db(\CRG)$ such that $X\oplus\Sigma X$ admits `$m$-free permutation resolutions' for all $m\geq 0$.
However, in this paper we will not need this description.})
In other words, $\bar{\Permtomod}$ yields the tensor-triangulated equivalence~\eqref{eq:Kb/Kbac=Dperm}.
\end{Rec}

\begin{Rec}
\label{Rec:ImF-stability}%
It is easy to see that $\Dperm{-}{\CR}$ is stable under restriction and induction, and that $X\in\DpermGR$ if and only if $\Res^G_HX\in\Dperm{H}{\CR}$ for every Sylow subgroup $H\le G$. In fact, it suffices to test for $H\le G$ among the (maximal) elementary abelian subgroups of~$G$. Details can be found in~\cite[Proposition~2.20, Corollary~2.21 and Remark~4.9]{balmer-gallauer:resol-small}. However, the arguments in the present paper do \emph{not} use reduction to elementary abelian subgroups.
\end{Rec}

\begin{Rec}
\label{Rec:R-perfect}%
Recall~\cite[Definition~2.22]{balmer-gallauer:resol-small} that a complex $X\in\Db(\CRG)$ is \emph{$\CR$-perfect} if the underlying complex $\Res^G_1X\in\Db(\CR)$ is perfect.
We denote the thick tensor subcategory of $\CR$-perfect complexes by~$\DRperf$.
It is obvious that we always have $\DpermGR\subseteq\DRperf$. This is an equality if the order~$|G|$ of the group is invertible in~$\CR$; see~\cite[Proposition~2.20]{balmer-gallauer:resol-small}.

In summary, we have the following inclusions of small `derived' categories
\[
\Dperf(\CRG)\subseteq \DpermGR\subseteq\DRperf\subseteq\Db(\CRG).
\]
\end{Rec}

\subsection*{Singularity category}

The target of our `cohomological singularity' functor~\eqref{eq:coh-sing-functor} is the big singularity category of the coefficient ring~$\CR$. Let us remind the reader.

\begin{Rec}
\label{Rec:singularity}%
As in~\cite{krause:stabX}, let~$\cA$ be a locally noetherian Grothendieck category whose derived category is compactly-generated. For $\cA=\MMod{\CR}$, the subcategory of noetherian objects $\noeth\cA$ is $\mmod{\CR}$ and $\Der(\cA)$ is generated by~$\Der(\cA)^c=\Dperf(\CR)$. Similarly for~$\cA=\MMod{\CRG}$. The \emph{big singularity category} (or \emph{stable derived category}) of~$\cA$ is
\[
\Dsing(\cA)=\Kac(\Inj\cA)
\]
the full subcategory of the big homotopy category of injectives $\Komp(\Inj\cA)$ spanned by acyclic complexes.
There is a recollement $Q_{\lambda}\adj Q\adj Q_{\rho}$ and $I_{\lambda}\adj I\adj I_{\rho}$
\begin{equation}
\label{eq:sing-recollement}%
\vcenter{\xymatrix@R=2em{
\Der(\cA) \ar@<-1.5em>@{ >->}[d]_(.55){Q_{\lambda}} \ar@<1.5em>@{ >->}[d]^(.55){Q_{\rho}}
\\
\Komp(\Inj\cA) \ar@{->>}[u]|(.45){Q\vphantom{I^I_j}} \ar@<-1.5em>@{->>}[d]_(.45){I_{\lambda}} \ar@<1.5em>@{->>}[d]^(.45){I_{\rho}}
\\
\Dsing(\cA)=\Kac(\Inj\cA) \ar@{ >->}[u]|(.55){\vphantom{I^I_m}I}
}}
\end{equation}
for $I\colon \Kac(\Inj\cA)\into \Komp(\Inj\cA)$ the inclusion and $Q\colon\Komp(\Inj\cA)\onto\Der(\cA)$ the usual localization $Q^+\colon \Komp(\cA)\onto \Der(\cA)$ restricted to~$\Komp(\Inj\cA)$.
The \emph{singularity functor} (Krause's \emph{stabilization functor}) is defined as the composite
\[
\StabD_{\cA}:\Der(\cA)\xto{I_\lambda\circ Q_{\rho}}\Dsing(\cA).
\]
There is a natural transformation $Q_{\lambda}\to Q_{\rho}$ that is invertible on compacts:
\begin{equation}
\label{eq:Qr-Ql-iso}%
\qquad\qquad
Q_{\lambda}(X)\cong Q_{\rho}(X)
\qquad\textrm{if }X\in\Der(\cA)^c
\end{equation}
by \cite[Lemma~5.2]{krause:stabX}. On the larger subcategory $\Db(\noeth\cA)$, the right adjoint $Q_{\rho}$ defines an inverse to the equivalence of~\cite[\S\,2]{krause:stabX} identifying $\Komp(\Inj\cA)^c$
\[
Q\colon \Komp(\Inj\cA)^c\isoto \Db(\noeth\cA).
\]
In summary, we have a finite localization sequence $\Der(\cA)\ \xinto{Q_\lambda}\ \Komp(\Inj\cA)\ \xonto{I_\lambda}\ \Dsing(\cA)$ as in~\eqref{eq:sing-recollement}
and the triangulated category $\Dsing(\cA)$ is compactly-generated with compact part the (usual) \emph{small singularity category}, idempotent-completed:
\[
\Dsing(\cA)^c\cong\left(\frac{\Komp(\Inj\cA)^c}{(Q_{\lambda}\Der(\cA))^c}\right)^\natural\cong\left(\frac{\Db(\noeth\cA)}{\Der(\cA)^c}\right)^\natural=\Dbs(\cA)^\natural.
\]
Finally, we note that since $\Komp(\Inj\cA)$ is compactly-generated and since the inclusion $\Komp(\Inj\cA)\into\Komp(\cA)$, that we denote~$J$, preserves products and coproducts (because $\cA$ is locally noetherian), there is another useful triple of adjoints $J_{\lambda}\adj J\adj J_{\rho}$:
\begin{equation}
\label{eq:J-adjoints}%
\vcenter{\xymatrix@R=2em{
\Komp(\cA) \ar@<-1.5em>@{->>}[d]_(.45){J_{\lambda}} \ar@<1.5em>@{->>}[d]^(.45){J_{\rho}}
\\
\Komp(\Inj\cA). \ar@{ >->}[u]|(.55){\vphantom{I^I_m}J}
}}
\end{equation}
\end{Rec}

\begin{Lem}
\label{Lem:resolutions}%
Keep the above notation, \eg for $\cA=\MMod{\CRG}$. Let $X$ be in~$\Komp(\cA)$.
\begin{enumerate}[\rm(a)]
\item
\label{it:resolution-Qr}%
The object~$Q_{\rho}(X)\in\Komp(\Inj\cA)$ in~\eqref{eq:sing-recollement} is a K-injective resolution of~$X$. In particular, if $X\in\Komp_{-}(\cA)$ is left-bounded then $Q_{\rho}(X)$ belongs to~$\Komp_-(\Inj\cA)$.
\smallbreak
\item
\label{it:resolution-Jl}%
If $X\in\Komp_{-}(\cA)$ is left-bounded, the object~$J_{\lambda}(X)\in\Komp(\Inj\cA)$ in~\eqref{eq:J-adjoints} is an in\-jective resolution of~$X$. Hence if $X\in\Komp_{-,\ac}(\cA)$ is also acyclic then $J_{\lambda}(X)=0$.
\smallbreak
\item
\label{it:resolution-Qr=Jl}%
The restrictions of the two functors $Q_{\rho}$ and~$J_{\lambda}$ to $\Komp_-(\cA)$ are isomorphic.
\end{enumerate}
\end{Lem}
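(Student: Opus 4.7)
The plan is to reduce all three statements to Krause's key input from~\cite{krause:stabX} that, for a locally noetherian Grothendieck category~$\cA$, every complex in~$\Komp(\Inj\cA)$ is automatically K-injective. Combined with the universal properties of the adjunctions $Q\adj Q_{\rho}$ and $J_{\lambda}\adj J$, this reduces each part to a short diagram chase.

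For part~(a), since $Q_{\rho}(X)\in\Komp(\Inj\cA)$, it is K-injective by the above. Because $Q_{\rho}$ is fully faithful (as the Bousfield section in the recollement~\eqref{eq:sing-recollement}), the counit $QQ_{\rho}(X)\xto{\sim}X$ is an isomorphism in~$\Der(\cA)$. K-injectivity of $JQ_{\rho}X$ then upgrades
\[
\Hom_{\Komp(\cA)}(X,JQ_{\rho}X)=\Hom_{\Der(\cA)}(X,QQ_{\rho}X)\cong\Hom_{\Der(\cA)}(X,X),
\]
and the preimage of~$\id_X$ yields a quasi-isomorphism $X\to JQ_{\rho}X$, exhibiting $Q_{\rho}(X)$ as a K-injective resolution. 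For left-boundedness, any classical bounded-below injective resolution $X\to I$ with $I\in\Komp_{-}(\Inj\cA)$ is itself K-injective, so uniqueness of K-injective resolutions up to homotopy forces $Q_{\rho}(X)\simeq I$, hence $Q_{\rho}(X)\in\Komp_{-}(\Inj\cA)$.

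For part~(c), fix $X\in\Komp_{-}(\cA)$ with classical injective resolution $X\to I$ as above. For every $Y\in\Komp(\Inj\cA)$, the adjunction $J_{\lambda}\adj J$, the K-injectivity of~$Y$, and the quasi-isomorphism $X\to I$ combine to give
\[
\Hom_{\Komp(\Inj\cA)}(J_{\lambda}X,Y)=\Hom_{\Komp(\cA)}(X,Y)=\Hom_{\Der(\cA)}(X,Y)=\Hom_{\Der(\cA)}(I,Y)=\Hom_{\Komp(\Inj\cA)}(I,Y),
\]
so Yoneda provides a natural isomorphism $J_{\lambda}(X)\cong I$ in~$\Komp(\Inj\cA)$. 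Combined with $Q_{\rho}(X)\simeq I$ from~(a), this yields the natural isomorphism $J_{\lambda}\cong Q_{\rho}$ on~$\Komp_{-}(\cA)$.

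Part~(b) then follows: $J_{\lambda}(X)\cong Q_{\rho}(X)$ is an injective resolution of~$X$ by a left-bounded complex. For the vanishing, if $X$ is in addition acyclic then $X\cong 0$ in~$\Der(\cA)$, and the computation above gives $\Hom_{\Komp(\Inj\cA)}(J_{\lambda}X,Y)=0$ for every $Y\in\Komp(\Inj\cA)$, whence $J_{\lambda}(X)=0$. The only real difficulty is bookkeeping---tracking which category each object lives in and which adjunction produces each structure map; once Krause's K-injectivity of complexes of injectives is invoked, no further substantive input is required.
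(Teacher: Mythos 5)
Your proposal rests on a claim that is false in exactly the situations this paper cares about: it is \emph{not} true that every complex in $\Komp(\Inj\cA)$ is K-injective. What is true is that every \emph{left-bounded} complex of injectives is K-injective. For unbounded complexes the assertion fails whenever $\CR$ is not regular: an acyclic complex of injectives that were K-injective would be contractible, so if your claim held then $\Dsing(\cA)=\Kac(\Inj\cA)$ would vanish, contradicting the whole point of the construction. (Said differently, the failure of your claim is precisely what the recollement~\eqref{eq:sing-recollement} is measuring.)

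This error is not peripheral. In part~(a) you argue that $Q_{\rho}(X)$ is K-injective \emph{because} it lies in $\Komp(\Inj\cA)$, which is the false statement; you cannot instead appeal to left-boundedness of $Q_{\rho}(X)$, since that is the ``in particular'' conclusion you are trying to establish, not a hypothesis. In part~(c) the chain of equalities
\[
\Hom_{\Komp(\cA)}(X,Y)=\Hom_{\Der(\cA)}(X,Y)=\Hom_{\Der(\cA)}(I,Y)=\Hom_{\Komp(\Inj\cA)}(I,Y)
\]
uses K-injectivity of an \emph{arbitrary} $Y\in\Komp(\Inj\cA)$ in the second and fourth steps, which again invokes the false claim; and Yoneda requires the isomorphism for all such~$Y$, so restricting to left-bounded~$Y$ would not suffice. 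The paper sidesteps all of this: it uses Krause's identity $Q\,J_{\lambda}\cong Q^{\scriptscriptstyle+}$ to recognize $JQ_{\rho}$ as the right adjoint of the Bousfield localization $Q^{\scriptscriptstyle+}$, whose essential image is \emph{by definition} $\Kac(\cA)^{\perp}$, i.e.\ the K-injectives. That gives~(a) honestly; (b) is then handled by a direct argument that a map from a left-bounded acyclic complex to a complex of injectives is nullhomotopic, and (c) is read off from uniqueness of K-injective resolutions. You should replace the blanket K-injectivity claim with this adjunction argument (for~(a)) and with the left-bounded case of the K-injectivity fact (for the injective resolution~$I$), and redo~(c) along those lines.
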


\begin{proof}
By \cite[Remark~3.7]{krause:stabX}, we have $Q\,J_{\lambda}\cong Q^{\scriptscriptstyle+}$, where $Q^{\scriptscriptstyle+}\colon \Komp(\cA)\onto \Der(\cA)$ is the (Bousfield) localization defining~$\Der(\cA)$. It follows that $J\,Q_{\rho}$ is right adjoint to~$Q^{\scriptscriptstyle+}$. So, if we let $i:=JQ_{\rho}\circ Q^{\scriptscriptstyle+}$, every $X\in\Komp(\cA)$ fits in an exact triangle
\begin{equation}
\label{eq:ai-triangle}%
a(X) \to X \xto{\eta} i(X) \to \Sigma a(X)
\end{equation}
in~$\Komp(\cA)$, where $a(X)\in\Ker(Q^{\scriptscriptstyle+})=\Kac(\cA)$ and $i(X)$ belongs to~$\Ker(Q^{\scriptscriptstyle+})^{\perp}=\Kac(\cA)^\perp$, that is, $i(X)$ is \emph{K-injective} by definition. In other words, \eqref{eq:ai-triangle} is the essentially unique triangle providing the K-injective resolution of~$X$ (see \cite[Corollary~3.9]{krause:stabX} if necessary). Suppressing the functors~$J$ and~$Q^{\scriptscriptstyle+}$ that are just the identity on objects, we have $i(X)=Q_{\rho}(X)$, which gives~\eqref{it:resolution-Qr}.

Let now $A\in\Komp_{-,\ac}(\cA)$. The unit $\eta'\colon A\to JJ_\lambda(A)$ is a map from a left-bounded acyclic to a complex of injectives, hence $\eta'=0$ in~$\Komp(\cA)$. But $J_{\lambda}(\eta')\colon J_{\lambda}\isoto J_\lambda JJ_{\lambda}$ is invertible ($J$ being fully faithful). Thus $J_{\lambda}(A)=0$, as in the second claim of~\eqref{it:resolution-Jl}.

Take now~$X\in\Komp_{-}(\cA)$ arbitrary and an injective resolution $i(X)\in\Komp_-(\Inj\cA)$. There is a triangle~\eqref{eq:ai-triangle} with $a(X)$ acyclic and left-bounded since~$X$ and~$i(X)$ are. By the above for $A=a(X)$, we already know that $J_{\lambda}(a(X))=0$. Applying~$J_{\lambda}$ to the triangle~\eqref{eq:ai-triangle} in question we get $J_{\lambda}(X)\cong J_{\lambda}(i(X))\cong i(X)$ since $i(X)\in\Komp(\Inj\cA)$. Hence~\eqref{it:resolution-Jl}.

Part~\eqref{it:resolution-Qr=Jl} is now immediate from the uniqueness of K-injective resolutions.
\end{proof}

Let us now specialize to~$\cA=\MMod{\CR}$.
\begin{Lem}
\label{Lem:Ker-sing}%
Let~$X$ be an object of~$\Der(\CR)$. The following are equivalent:
\begin{enumerate}[\rm(i)]
\item
\label{it:Ker-sing-i}%
The image of~$X$ in~$\Dsing(\CR)$ is zero: $\StabD(X)=0$.
\item
\label{it:Ker-sing-ii}%
$Q_{\rho}(X)$ belongs to the localizing subcategory of $\Komp(\Inj(\CR))$ generated by $Q_{\rho}(\CR)$.
\item
\label{it:Ker-sing-iii}%
For every $Y\in\Db(\CR)$, every map $Y\to X$ in~$\Der(\CR)$ factors via a perfect complex.
\end{enumerate}
\end{Lem}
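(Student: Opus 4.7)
The plan is to prove (i)$\Leftrightarrow$(ii) directly from the recollement~\eqref{eq:sing-recollement}, (i)$\Rightarrow$(iii) by compact generation in $\Komp(\Inj\CR)$, and (iii)$\Rightarrow$(i) by compact generation in $\Dsing(\CR)$ combined with the Verdier quotient description.

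For (i)$\Leftrightarrow$(ii): since $\StabD=I_\lambda\circ Q_\rho$ and $I_\lambda$ is a Bousfield localization with kernel $\Img(Q_\lambda)$, condition (i) is equivalent to $Q_\rho(X)\in\Img(Q_\lambda)$. Because $\Der(\CR)=\Loc(\CR)$ and $Q_\lambda$, as a left adjoint, preserves coproducts, $\Img(Q_\lambda)=\Loc(Q_\lambda\CR)$; finally $Q_\lambda\CR\cong Q_\rho\CR$ by~\eqref{eq:Qr-Ql-iso}, giving~(ii). For (i)$\Rightarrow$(iii): by~(ii), the object $Q_\rho(X)$ lies in $\Loc(Q_\rho\CR)\subseteq\Komp(\Inj\CR)$, which is compactly generated with compact part $\thick(Q_\rho\CR)=Q_\rho(\Dperf(\CR))$. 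Hence $Q_\rho(X)$ is a homotopy colimit of objects $Q_\rho(P_\alpha)$ with each $P_\alpha$ perfect. Given $f\colon Y\to X$ with $Y\in\Db(\CR)$, the source $Q_\rho(Y)$ is compact in $\Komp(\Inj\CR)$ by~\Cref{Rec:singularity}, so $Q_\rho(f)$ factors through some $Q_\rho(P_\alpha)$; applying $Q$ yields the desired factorization $Y\to P_\alpha\to X$ in $\Der(\CR)$.

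For (iii)$\Rightarrow$(i), the key idea is to exploit the Verdier quotient description $\Dsing(\CR)\simeq\Komp(\Inj\CR)/\Loc(Q_\rho\CR)$, noting that $\Loc(Q_\rho\CR)$ is itself compactly generated by $Q_\rho(\Dperf(\CR))$. Since $\Dsing(\CR)$ is compactly generated with compact part $\Dbs(\CR)^\natural$, it suffices to show that $\Hom_{\Dsing(\CR)}(\StabD(Y),\Sigma^n\StabD(X))=0$ for every $Y\in\Db(\CR)$ and every $n\in\bbZ$. By the calculus of fractions for a Verdier quotient by a compactly generated subcategory, every such morphism $\phi$ admits a single-roof representation $Q_\rho(Y)\xleftarrow{Q_\rho(s)}Q_\rho(Y')\xrightarrow{Q_\rho(f)}Q_\rho(\Sigma^n X)$ in $\Komp(\Inj\CR)$, with $Y'\in\Db(\CR)$, $\cone(s)\in\Dperf(\CR)$, and $f\in\Hom_{\Der(\CR)}(Y',\Sigma^n X)$. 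Applying~(iii) to $f$ (or rather to the corresponding map $\Sigma^{-n}Y'\to X$ from a bounded complex) produces a factorization through a perfect complex, so $\StabD(f)=0$ in $\Dsing(\CR)$ and hence $\phi=0$. The main step to verify carefully is the single-roof representation with compact apex $Y'$, which relies on the compact generation of $\Loc(Q_\rho\CR)$ and on a Neeman-style refinement of roofs in compactly generated Verdier quotients.
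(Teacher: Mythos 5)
Your proof is essentially correct, and the (i)$\Leftrightarrow$(ii) step coincides with the paper's. The difference is in how you handle the link to~(iii). The paper invokes, in a single stroke, the characterization from~\cite[\S\,2]{neeman:thomason-localization}: in a compactly generated triangulated category, an object lies in the localizing subcategory generated by a thick subcategory $\cJ$ of compacts if and only if every map from a compact generator to it factors through $\cJ$. Applied to $\Komp(\Inj(\CR))$, $\cJ=Q_\rho(\Dperf(\CR))$, and $Z=Q_\rho(X)$, this gives (ii)$\Leftrightarrow$(iii) directly, using full faithfulness of~$Q_\rho$. You instead split the work: (i)$\Rightarrow$(iii) via a factorization-through-compacts argument and (iii)$\Rightarrow$(i) via the calculus of fractions in the Verdier quotient $\Komp(\Inj\CR)/\Loc(Q_\rho\CR)$, testing $\StabD(X)$ against the compact generators $\StabD(Y)$, $Y\in\Db(\CR)$, and using a ``compact apex'' refinement of roofs. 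This is a genuinely different and more elaborate route; it works, but you are essentially re-deriving one half of Neeman's characterization by hand, whereas the paper simply cites it.

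Two points deserve attention. First, in (i)$\Rightarrow$(iii), the phrase ``$Q_\rho(X)$ is a homotopy colimit of objects $Q_\rho(P_\alpha)$ with each $P_\alpha$ perfect'' is not literally accurate: objects of $\Loc(Q_\rho\CR)$ are homotopy colimits of (possibly infinite coproducts of) shifted copies of $Q_\rho\CR$, and those intermediate stages are not compact. What is true, and what you actually need, is precisely Neeman's factorization statement: a map from a compact object into $\Loc(\cJ)$ factors through $\thick(\cJ)$. Stating it that way closes the small gap. Second, the ``single-roof with compact apex'' refinement you flag in (iii)$\Rightarrow$(i) is indeed a standard consequence of the same Neeman argument (complete the roof's wrong-way arrow $s$ to a triangle, factor the compact source's map to $\cone(s)\in\Loc(\cJ)$ through $\thick(\cJ)$, and replace $s$ accordingly), so your reliance on it is sound, but it is exactly the ingredient that makes the paper's one-step use of \cite[\S\,2]{neeman:thomason-localization} both cleaner and more economical.
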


\begin{proof}
We have $\StabD=I_{\lambda}\circ Q_{\rho}$ by definition. So we have $\StabD(X)=0$ if and only if $Q_{\rho}(X)\in\Ker(I_{\lambda})$ and by~\eqref{eq:sing-recollement} that kernel is
$\Ker(I_{\lambda})=Q_{\lambda}(\Der(\CR))=Q_{\lambda}(\Loc{\CR})=\Loc{Q_{\lambda}(\CR)}$, where the last equality holds since $Q_{\lambda}$ is coproduct-preserving and fully faithful. We get the formulation~\eqref{it:Ker-sing-ii} from $Q_{\lambda}(\CR)\cong Q_{\rho}(\CR)$ since~$\CR\in\Der(\CR)^c$; see \eqref{eq:Qr-Ql-iso}. To reformulate this as~\eqref{it:Ker-sing-iii}, recall from~\cite[\S\,2]{neeman:thomason-localization} that in a compactly-generated triangulated category, the localizing subcategory generated by a thick subcategory~$\cJ$ of compacts consists of those $X$ such that every map from the generators to~$X$ factors via an object of~$\cJ$. If we apply this to $\Komp(\Inj(\CR))$ and the object~$Q_{\rho}(X)$, we see that $Q_{\rho}(X)\in\Loc{Q_{\lambda}(\Dperf(\CR))}=\Loc{Q_{\rho}(\Dperf(\CR))}$ if and only if for every $Y\in\Db(\CR)$ every map~$Q_{\rho}(Y)\to Q_{\rho}(X)$ factors via $Q_{\rho}(P)$ for some $P\in\Dperf(\CR)$. This is equivalent to~\eqref{it:Ker-sing-iii} since $Q_{\rho}$ is fully faithful.
\end{proof}

\begin{Rem}
In fact, $\StabD(X)=0$ is also equivalent to $Q_{\lambda}(X)\isoto Q_{\rho}(X)$ but we will not need this in the sequel.
\end{Rem}


\section{Cohomological singularity}
\label{sec:inv}


In this section, we define the announced cohomological singularity functor~\eqref{eq:CInv-intro}.

\begin{Rec}
\label{Rec:cc}%
The functor that equips every $\CR$-module with trivial $G$-action
\[
\Infl_1^G\cong\ihom_{\CR}(\CR,-)\cong \CR\otimes_{\CR}-
\colon\MMod{\CR}\to \MMod{\CRG}
\]
has adjoints the usual $G$-orbits $(-)_G$ and $G$-fixed points $(-)^G$
\begin{equation}
\label{eq:Infl-adjoints}%
\vcenter{\xymatrix@R=1.8em{
\MMod{\CRG} \ar@<-2em>@/_2em/[d]_-{\CR\otimes_{\CRG}-\,=\,(-)_G} \ar@<2em>@/^2em/[d]^-{\ihom_{\CRG}(\CR,-)\,=\,(-)^G}
\\
\MMod{\CR}\ar[u]|-{\Infl_1^G}
}}
\end{equation}
This triple of adjoints passes to homotopy categories of complexes on the nose. For derived categories, we left-derive the left adjoint and right-derive the right one:
\begin{equation}
\kern4.5em\vcenter{\xymatrix@R=1.8em{
\Der(\CRG) \ar@<-2em>@/_2em/[d]_-{\CR\Lotimes_{\CRG}-=:(-)_{\rmh G}} \ar@<2em>@/^2em/[d]^-{\rmR\ihom_{\CRG}(\CR,-)=:(-)^{\rmh G}}
\\
\Der(\CR) \ar[u]|-{\Infl_1^G}
}}
\end{equation}
So $(-)^{\rmh G}$ provides a complex whose homology groups are $G$-cohomology as in~\eqref{eq:intro}.
\end{Rec}

\begin{Def}
\label{Def:coh-inv}%
Let $H\le G$ be a subgroup. The \emph{$H$-cohomological singularity} functor~$\CInvHR=\StabD_{\CR}\circ(-)^{\rmh H}$ is the following composite (see \Cref{Rec:singularity} for~$\StabD$):
\[
\CInvHR\colon\Der(\CRG)\xto{\Res^G_H}\Der(\CR H)\xrightarrow{(-)^{\rmh H}}\Der(\CR)\xto{\StabD_{\CR}}\Dsing(\CR).
\]
We say that a complex $X\in\Der(\CRG)$ is \emph{$H$-cohomologically perfect} if $\CInvHR(X)=0$.
We say that $X$ is \emph{cohomologically perfect}, if it is $H$-cohomologically perfect for all subgroups $H\le G$, that is, if~$\oplus_{H}\,\CInvHR(X)=0$ in~$\Dsing(\CR)$.
\end{Def}

\begin{Exa}
\label{Exa:R-perfect}%
For the trivial subgroup~$H=1$, a complex $X\in\Db(\CRG)$ is $1$-cohomologically perfect if and only if its underlying complex $\Res^G_1 X\in\Db(\CR)$ is perfect, that is, if and only if $X$ is \emph{$\CR$-perfect} in the sense of \Cref{Rec:R-perfect}.
\end{Exa}

\begin{Rem}
\label{Rem:R-perfect}%
We remind the reader that although $\Ker(\StabD)\cap \Db(\CR)=\Dperf(\CR)$, the kernel of~$\StabD\colon \Der(\CR)\to \Dsing(\CR)$ on the big derived category is larger than $\Dperf(\CR)$. For instance we will see in \Cref{Lem:chi-necessary} that $\CR^{\rmh G}$ belongs to that kernel.

Even when $H=1$, a big object $X\in \Der(\CRG)$ being $1$-cohomologically perfect is more flexible than being $\CR$-perfect although the two notions coincide when $X\in\Db(\CRG)$ is bounded, \ie when $\Res^G_1(X)\in\Db(\CR)$, as we saw in \Cref{Exa:R-perfect}.

For more general subgroups~$H\le G$, even a bounded complex $X\in\Db(\CRG)$ can be $H$-cohomologically perfect without $X^{\rmh H}$ being perfect; see \Cref{Exa:coh-perfect-not-1-perfect}.

We provide a further justification of the terminology in~\Cref{Rem:coh-ring-mod}.
\end{Rem}

\begin{Rem}
\label{Rem:balanced-c^!}%
The functor $(-)^G\cong\ihom_{\CRG}(\CR,-)$ is a special value of the bifunctor
\[
\MMod{\CRG}\op\times\MMod{\CRG}\xto{\ihom_{\CRG}(-,-)}\MMod{\CR}.
\]
It follows that for any $X\in\Der(\CRG)$ the object $X^{\rmh G}$ is represented by both
\begin{equation}
\label{eq:c!-explicit}%
\ihom_{\CRG}(P_\CR,X)
\qquadtext{and}
\ihom_{\CRG}(\CR,i(X))
\end{equation}
where $P_{\CR}\to \CR$ is a projective resolution of $\CR$ as an $\CRG$-module, and $X\to i(X)$ is a K-injective resolution of~$X$, for both are quasi-isomorphic to $\ihom_{\CRG}(P_\CR,i(X))$.
\end{Rem}

\begin{Rem}
\label{Rem:c^!-|G|-invert}%
If the order $|G|$ is invertible in~$\CR$, then the trivial $\CRG$-module $\CR$ is projective by Maschke. In that case, $(-)^G$ is exact and coincides with~$(-)^{\rmh G}$.
\end{Rem}

We can use this to see that being $G$-cohomologically perfect does not imply being $H$-cohomologically perfect for each subgroup~$H\le G$, even for $H=1$.
\begin{Exa}
\label{Exa:coh-perfect-not-1-perfect}%
Let $\CR=\bbZ/9$ and $G=C_2=\langle\, x\mid x^2=1\,\rangle$. Consider the $\CR$-module $M=\bbZ/3$ with the action of~$x$ by~$-1$. We have $M^G=0$ hence $M^{\rmh G}=0$ by \Cref{Rem:c^!-|G|-invert}. In particular, $M$ is $G$-cohomologically perfect but it is not $H$-cohomologically perfect for the subgroup~$H=1$ since $\bbZ/3$ is not perfect over~$\bbZ/9$.
\end{Exa}

Let us establish some generalities about the cohomological singularity functor.
\begin{Prop}
\label{Prop:CInv-Ind}%
Let $H\le G$ be a subgroup. There are canonical isomorphisms
\[
(-)^{\rmh G}\circ\Ind^G_H\cong(-)^{\rmh H}
\qquadtext{and}
\CInvGR\circ\Ind^G_H\cong\CInvHR.
\]
\end{Prop}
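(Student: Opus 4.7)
The plan is to prove the first isomorphism; the second follows at once by post-composing with the singularity functor $\StabD_{\CR}$. The underlying idea is Shapiro's lemma: since $[G:H]<\infty$, induction coincides with coinduction, and the latter is manifestly the right adjoint to the exact restriction functor.

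More precisely, the $\CRG\text{-}\CR H$-bimodule $\CRG$ is free of finite rank on either side, which gives a canonical natural isomorphism
\[
\Ind^G_H\cong \Hom_{\CR H}(\CRG,-)
\]
of functors $\MMod{\CR H}\to \MMod{\CRG}$. In this presentation $\Ind^G_H$ visibly is the right adjoint of the exact functor $\Res^G_H$, so it preserves K-injective complexes. Given $M\in\Der(\CR H)$, choose a K-injective resolution $M\to i(M)$ in $\Komp(\CR H)$. Then $\Ind^G_H(i(M))$ is a K-injective resolution of $\Ind^G_H(M)$ in $\Komp(\CRG)$, so by \Cref{Rem:balanced-c^!} the object $(\Ind^G_H M)^{\rmh G}$ is represented by $(\Ind^G_H i(M))^G$. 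The underived tensor--hom adjunction reads
\[
\Hom_{\CRG}\bigl(\CR,\Hom_{\CR H}(\CRG,N)\bigr)\cong \Hom_{\CR H}(\Res^G_H\CR,N)=N^H
\]
naturally in the $\CR H$-module~$N$. Applied degreewise to $i(M)$, this gives an isomorphism $(\Ind^G_H i(M))^G\cong i(M)^H$ in $\Komp(\CR)$. The right-hand side computes $M^{\rmh H}$, and naturality in~$M$ yields the announced isomorphism of functors $\Der(\CR H)\to\Der(\CR)$. Post-composing with $\StabD_{\CR}$ produces the second isomorphism $\CInvGR\circ\Ind^G_H\cong\CInvHR$.

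There is no serious obstacle; the argument is formal once the coincidence of induction and coinduction (under the finite-index assumption) is recorded, and its right-adjoint nature is used to propagate K-injectivity through the composition. The only mild care required is to work consistently with K-injective resolutions so that the composite $\rmR(-)^G\circ\Ind^G_H$ can be identified with the derived functor of the exact composition $(-)^G\circ \Hom_{\CR H}(\CRG,-)=(-)^H$.
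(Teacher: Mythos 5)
Your proof is correct and follows essentially the same route as the paper: the key points in both are that, since $[G\!:\!H]<\infty$, induction coincides with coinduction and is therefore an exact right adjoint of restriction which preserves K-injectives, from which the underived identity $(-)^G\circ\Ind^G_H\cong(-)^H$ and its right-derived version follow. The paper phrases this more tersely (take right adjoints of $\Res^G_H\circ\Infl_1^G=\Infl_1^H$, then right-derive), whereas you unwind the argument explicitly with K-injective resolutions and the tensor--hom adjunction, but the content is the same.
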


\begin{proof}
The first isomorphism follows from the relation $\Res^G_H\circ \Infl_1^G=\Infl_1^H$, by taking right adjoints and right-deriving. We use here that induction is also right-adjoint to restriction (because $[G\!:\!H]<\infty$) and is exact. See~\cite{krause:H-book} if necessary. The second isomorphism follows by post-composing with~$\StabD_{\CR}$.
\end{proof}

\begin{Cor}
\label{Cor:CInv-Ind-Res}%
Let $H\le G$. Then induction~$\Ind_H^G\colon \Db(\CR H)\to\Db(\CRG)$ and restriction $\Res^G_H\colon \Db(\CRG)\to\Db(\CR H)$ preserve cohomologically perfect complexes.
\end{Cor}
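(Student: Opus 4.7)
For \emph{restriction}, the claim is essentially tautological: unwinding \Cref{Def:coh-inv}, for any chain $J\le H\le G$ and any $X\in\Db(\CRG)$, the transitivity $\Res^H_J\circ\Res^G_H=\Res^G_J$ immediately yields $\CInv{J}{\CR}(\Res^G_HX)=\CInv{J}{\CR}(X)$. Thus cohomological perfection of $X$ over $G$ implies cohomological perfection of $\Res^G_HX$ over $H$.

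For \emph{induction}, let $Y\in\Db(\CR H)$ be cohomologically perfect. I would fix an arbitrary subgroup $K\le G$ and show $\CInv{K}{\CR}(\Ind^G_HY)=0$. The plan is to exploit the Mackey double-coset decomposition
\[
\Res^G_K\circ\Ind^G_H\;\cong\;\bigoplus_{g\in K\backslash G/H}\Ind^K_{K\cap {}^gH}\circ(c_{g^{-1}})^*\circ\Res^H_{H\cap {}^{g^{-1}}K},
\]
where ${}^gH=gHg^{-1}$ and $c_{g^{-1}}\colon K\cap {}^gH\isoto H\cap {}^{g^{-1}}K$ denotes conjugation by $g^{-1}$. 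Being an identity of exact functors, this descends to $\Db$. Applying $(-)^{\rmh K}$ and invoking \Cref{Prop:CInv-Ind} (with $K$ in the role of $G$ and $K\cap{}^gH$ in the role of $H$) to strip away each outer induction, then pulling each $(c_{g^{-1}})^*$ past the derived fixed points via their conjugation invariance, reduces each summand to the derived fixed points of $Y$ over the subgroup $H\cap {}^{g^{-1}}K\le H$. Postcomposing with $\StabD$ produces
\[
\CInv{K}{\CR}(\Ind^G_HY)\;\cong\;\bigoplus_{g\in K\backslash G/H}\CInv{H\cap {}^{g^{-1}}K}{\CR}(Y),
\]
and every summand vanishes by the hypothesis on~$Y$.

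The only non-formal input beyond Mackey and \Cref{Prop:CInv-Ind} is the conjugation invariance of derived fixed points: for a group isomorphism $c\colon K'\isoto K''$ and $Z\in\Der(\CR K'')$, one has $(c^*Z)^{\rmh K'}\cong Z^{\rmh K''}$. This follows formally by passing to right adjoints and right-deriving the evident compatibility $\Infl_1^{K'}\cong c^*\circ\Infl_1^{K''}$ from \Cref{Rec:cc}. I expect this step to be routine but worth spelling out, since it is what legitimises the identification of the Mackey summands with values of $\CInv{-}{\CR}$ on subgroups of~$H$.
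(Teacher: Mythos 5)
Your proposal is correct and takes essentially the same route as the paper: restriction is immediate from \Cref{Def:coh-inv} (via transitivity of restriction), and induction reduces to \Cref{Prop:CInv-Ind} via the Mackey formula, with conjugation invariance of derived fixed points as an implicit (and, as you note, routine) ingredient. The paper states this in one line; you simply spell out the Mackey double-coset bookkeeping.
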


\begin{proof}
Restriction is built into \Cref{Def:coh-inv}. For induction, it follows immediately from the Mackey formula and \Cref{Prop:CInv-Ind}.
\end{proof}

Here is a key computation of our invariant~$\CInvGR$ of~\Cref{Def:coh-inv}.
\begin{Lem}
\label{Lem:chi-necessary}%
The object $\CInvGR(\CR)=\StabD(\CR^{\rmh G})$ is zero in~$\Dsing(\CR)$.
\end{Lem}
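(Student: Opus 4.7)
My approach would be to apply condition \eqref{it:Ker-sing-iii} of \Cref{Lem:Ker-sing}: it suffices to show that every morphism $f\colon Y\to\CR^{\rmh G}$ in $\Der(\CR)$, with $Y\in\Db(\CR)$, factors through a perfect $\CR$-complex. Using the derived adjunction $\Infl_1^G\dashv(-)^{\rmh G}$ from \Cref{Rec:cc}, I would rewrite $f$ as $\tilde f\colon\Infl_1^G Y\to\CR$ in $\Der(\CRG)$, moving the problem into the representation-theoretic world where $\CR$ admits useful resolutions.

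Pick a resolution $P_\bullet\to\CR$ of the trivial $\CRG$-module by finitely generated free $\CRG$-modules, and consider its subcomplex truncations $P_{\leq N}=(P_N\to\cdots\to P_0)\in\Dperf(\CRG)$. The inclusion $\iota_N\colon P_{\leq N}\hookrightarrow P_\bullet\simeq\CR$ fits into a distinguished triangle
\[
P_{\leq N}\xrightarrow{\iota_N}\CR\longrightarrow M_N[N+1]\longrightarrow\Sigma P_{\leq N}
\]
in $\Der(\CRG)$, where $M_N=\ker(d_N\colon P_N\to P_{N-1})$ is an $\CRG$-module concentrated in a single degree. Since $\Infl_1^G Y$ lies in $\Db(\CRG)$, the $t$-structure forces $\Hom_{\Der(\CRG)}(\Infl_1^G Y, M_N[N+1])=0$ for all $N$ sufficiently large (depending on the cohomological range of $\Infl_1^G Y$). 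For such $N$, the long exact sequence of $\Hom_{\Der(\CRG)}(\Infl_1^G Y,-)$ applied to the triangle shows that $\tilde f$ lifts through $\iota_N$ to some $\bar f\colon\Infl_1^G Y\to P_{\leq N}$ with $\iota_N\circ\bar f=\tilde f$.

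Transporting the factorization $\tilde f=\iota_N\circ\bar f$ back through the adjunction, naturality in the target variable produces a factorization $f\colon Y\to(P_{\leq N})^{\rmh G}\to\CR^{\rmh G}$ in $\Der(\CR)$. It remains to check that $(P_{\leq N})^{\rmh G}$ is perfect over $\CR$: each $P_i=\CRG^{r_i}\cong\Ind_1^G(\CR^{r_i})$, so by \Cref{Prop:CInv-Ind} with $H=1$ we have $(P_i)^{\rmh G}\cong\CR^{r_i}\in\Dperf(\CR)$; since $(-)^{\rmh G}$ is triangulated and $P_{\leq N}$ is an iterated extension of shifted copies of the $P_i$, the resulting $(P_{\leq N})^{\rmh G}$ lies in $\Dperf(\CR)$. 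This provides the required factorization of $f$ through a perfect complex and concludes the proof.

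The main obstacle is the $t$-structure bound that makes the lift through $\iota_N$ possible — one has to check that $M_N[N+1]$ really sits strictly below the support of $\Infl_1^G Y$ for $N$ large — together with the clean translation of the $\Der(\CRG)$-factorization back to a $\Der(\CR)$-factorization via the adjunction.
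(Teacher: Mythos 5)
Your proof is correct, and it takes a genuinely different and more elementary route than the paper's.

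The paper stays entirely on the $\Der(\CR)$ side: it represents $\CR^{\rmh G}$ as $\ihom_{\CRG}(P,\CR)$ for a specially constructed projective resolution~$P$, and then invokes the substantive result \cite[Corollary~5.3]{balmer-gallauer:resol-small} to exhibit $P$ (really, a resolution of $\CR\oplus\Omega\CR$) as a degreewise-stationary limit of bounded complexes~$Q(n)$ of \ppermutation modules. The point of that device is that each $Q(n)^\dag$ is $\CR$-perfect and $P^\dag=\colim_n Q(n)^\dag$, so a map from a right-bounded complex~$L$ factors through some $Q(n)^\dag$. Your argument instead transports the problem across the adjunction $\Infl_1^G\dashv(-)^{\rmh G}$ to~$\Der(\CRG)$, where the trivial module~$\CR$ admits an ordinary finite-free resolution~$P_\bullet$. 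The stupid truncations $P_{\le N}\in\Dperf(\CRG)$ approximate~$\CR$ with error $M_N[N+1]$ pushed arbitrarily far out of the range of~$\Infl_1^G Y$, so the lift follows from $t$-structure orthogonality; applying the adjunction to the lift and using its naturality in the target variable yields the factorization of $f$ through $(P_{\le N})^{\rmh G}$, which is perfect since each $(\CRG)^{\rmh G}\cong\CR$ (via \Cref{Prop:CInv-Ind} with $H=1$, or Shapiro's lemma) and $(-)^{\rmh G}$ is exact. The two steps you flagged as potential obstacles are both fine: the orthogonality $\Hom(\Infl_1^G Y, M_N[N+1])=0$ holds once $N$ exceeds the upper homological bound of~$Y$, and the translation of the factorization back to~$\Der(\CR)$ is exactly the naturality of the adjunction isomorphism in the codomain. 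What your proof buys is independence from \cite[Corollary~5.3]{balmer-gallauer:resol-small}, a nontrivial input about finite permutation resolutions; the price is that it does not engage with the permutation-module machinery that forms the backbone of the rest of the paper, which is presumably why the authors chose their route.
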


\begin{proof}
Recall from \Cref{Rem:balanced-c^!} that $\CR^{\rmh G}=\ihom_{\CRG}(P_{\CR},\CR)$ where $P_{\CR}$ is any projective resolution of~$\CR$ over~$\CRG$. Let $\Omega\CR=\Ker(\CRG\onto\CR)$ be the kernel of augmentation. By additivity, it suffices to prove that $\StabD(X)=0$ where $X=\ihom_{\CRG}(P,\CR)$, for any $\CRG$-projective resolution~$P$ of~$\CR\oplus\Omega\CR$.

By \cite[Corollary~5.3]{balmer-gallauer:resol-small}, there exists a sequence of quasi-isomorphisms of bounded complexes in~$\Chain_{\ge0}(\CRG)$
\[
\cdots \to Q(n+1)\to Q(n)\to \cdots \to Q(1)\to \CR\oplus\Omega\CR
\]
such that $Q(n)$ consists of finitely generated \ppermutation $\CRG$-modules (\ie direct summands of finitely generated permutation modules), and in the range $0 \le d < n$, the $\CRG$-module $Q(n)_d$ is projective and the map $Q(n+1)_d\to Q(n)_d$ is the identity. In particular, the above sequence of complexes $\cdots \to Q(n)\to \cdots \to Q(1)$ is eventually stationary in each degree and the limit $P=\lim_{n\to \infty} Q(n)$, computed degreewise, is a projective resolution of~$\CR\oplus\Omega\CR$.

Let us write for simplicity $(-)^\dag$ for $\ihom_{\CRG}(-,\CR)$. This additive functor induces degreewise the functor~$(-)^\dag=\ihom_{\CRG}(-,\CR)\colon \Komp(\CRG)\op\to \Komp(\CR)$. Our goal is to show that $\StabD(X)=0$ for $X=P^\dag$. Note that since $P=\lim_{n\to \infty}Q(n)$ in a degreewise stationary way, we also have $P^\dag=\colim_{n\to \infty}Q(n)^\dag$ in a degreewise stationary way, say, in $\Chain(\CR)$. The maps $Q(n)^\dag\to Q(n+1)^\dag\to\cdots\to P^\dag$ are the identity in degree~$d>-n$. Note also that $P^\dag\in \Komp_-(\CR)$ is left-bounded.

The key remark is that for every \ppermutation $\CRG$-module~$Q$, the $\CR$-module $Q^\dag$ is projective. Indeed, for $Q$ permutation, $Q^\dag$ is $\CR$-free. In our case, the complexes $Q(n)^\dag$ are therefore perfect over~$\CR$.

We prove $\StabD(P^\dag)=0$ via criterion~\eqref{it:Ker-sing-iii} in \Cref{Lem:Ker-sing}.
Let $Y\in\Db(\CR)$. A morphism $Y\to X=P^\dag$ in~$\Der(\CR)$ is given by a fraction $Y\lto L\to P^\dag$ in~$\Komp(\CR)$ where $L\to Y$ is a projective resolution of~$Y$, in particular $L\in \Komp_+(\CR)$ is right-bounded. It is then easy to see that any morphism $L\to P^\dag=\colim_{n\to \infty}Q(n)^\dag$ in~$\Komp(\CR)$ must factor via $Q(n)^\dag\to P^\dag$ for~$n\gg0$. Since $Q(n)^\dag$ is perfect, we have established condition~\eqref{it:Ker-sing-iii} of \Cref{Lem:Ker-sing} for our~$X$, giving us $\StabD(X)=0$ as wanted.
\end{proof}

Recall from~\eqref{eq:Dperm} the thick subcategory $\DpermGR$ of~$\Db(\CRG)$, generated by finitely generated permutation modules.

\begin{Prop}
\label{Prop:chi-necessary}%
Every object of $\DpermGR$ is cohomologically perfect.
\end{Prop}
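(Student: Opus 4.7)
The plan is to exhibit the class of cohomologically perfect complexes as a thick subcategory of $\Db(\CRG)$ that already contains the generators $\CR(G/K)$ of $\DpermGR$, reducing everything to \Cref{Lem:chi-necessary}.

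First, I would observe that for each $H \le G$ the functor $\CInvHR\colon \Db(\CRG)\to \Dsing(\CR)$ is triangulated, being the composite of the exact restriction $\Res^G_H$, the right-derived functor $(-)^{\rmh H}$, and the triangulated functor $\StabD_{\CR}$. Hence its kernel is a thick subcategory of $\Db(\CRG)$, and intersecting these kernels over all $H \le G$ shows that the class of cohomologically perfect complexes is itself thick in~$\Db(\CRG)$. Since $\DpermGR$ is by definition the thick subcategory generated by the modules $\CR(G/K) \cong \Ind_K^G(\CR)$ (with $\CR$ the trivial $\CR K$-module) for $K \le G$, it suffices to prove that each such $\Ind_K^G(\CR)$ is cohomologically perfect.

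Next I would invoke \Cref{Cor:CInv-Ind-Res}: induction preserves cohomologically perfect complexes. So the problem reduces to showing that for each subgroup $K \le G$, the trivial module $\CR\in\Db(\CR K)$ is cohomologically perfect, i.e.\ that $\CInvname^{K'}_{\CR}(\CR)=0$ in~$\Dsing(\CR)$ for every $K'\le K$. But $\CInvname^{K'}_{\CR}(\CR) = \StabD_{\CR}(\CR^{\rmh K'})$, which vanishes by \Cref{Lem:chi-necessary} applied with $K'$ in place of~$G$ (noting that lemma's proof depends only on the group at hand and the coefficient ring~$\CR$).

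The real content was already produced in \Cref{Lem:chi-necessary}, where the almost-projective \ppermutation resolutions of~\cite{balmer-gallauer:resol-small} are used to factor maps through perfect complexes. Consequently, no genuine obstacle remains for this proposition; the remaining work is purely organizational, namely the thick-subcategory reduction to the generators combined with the pass from $G$ to its subgroups via induction and \Cref{Prop:CInv-Ind}.
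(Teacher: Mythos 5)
Your proof is correct and follows essentially the same route as the paper's: both reduce by thickness to the generators $\CR(G/K)\cong\Ind_K^G(\CR)$, then combine \Cref{Cor:CInv-Ind-Res} with \Cref{Lem:chi-necessary} (applied to each subgroup in place of $G$). You simply spell out the intermediate steps that the paper leaves to the reader.
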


\begin{proof}
As cohomologically perfect complexes form a thick subcategory of $\Db(\CRG)$, it suffices to show that $\CR(G/H)$ is cohomologically perfect for all \mbox{subgroups $H\le G$}.
The latter follows easily from \Cref{Lem:chi-necessary} and \Cref{Cor:CInv-Ind-Res}.
\end{proof}

We can now apply \Cref{Prop:chi-necessary} to show that being $\CR$-perfect (\Cref{Rec:R-perfect}) is not sufficient to belong to~$\DpermGR$.
\begin{Exa}
\label{Exa:1-perf-not-A(G)}%
Let $k=\bbF_2$ and consider the ring $\CR=k[x]/\ideal{x^2-1}$. Take $G=C_2=\langle\, y\mid y^2=1\,\rangle$ cyclic of order~2.
Let $M=\CR_x$ denote the ring $\CR$ viewed as an $\CRG$-module with the non-trivial action of~$y$ via~$x$.
This $M\in\Db(\CR C_2)$ is $\CR$-perfect but we claim that $\CInv{C_2}{\CR}(M)\neq 0$.
As $\CR C_2$ is self-injective, the following resolution
\[
0\to \CR_x\xto{y-x}\CR C_2\xto{y-x}\CR C_2\to\cdots
\]
is an injective resolution of~$\CR_x$. Computing $(\CR_x)^{\rmh G}$ in~$\Der(\CR)$ as in \eqref{eq:c!-explicit} with the above $i(M)=\cdots 0\to \CR C_2\xto{y-x}\CR C_2\xto{y-x}\CR C_2\to\cdots$, we get that $(\CR_x)^{\rmh G}$ is
\[
\cdots 0\to \CR\xto{1-x}\CR\xto{1-x}\CR\to\cdots
\]
and we deduce that $(\CR_x)^{\rmh G}\simeq k$ in~$\Der(\CR)$.
But $k\in\Db(\CR)$ is not perfect, hence $\CInvGR(M)\simeq\StabD(k)\neq 0$.
In other words, $M$ is not cohomologically perfect.
Using \Cref{Prop:chi-necessary}, this means $M\notin\DpermGR$.
\end{Exa}


\section{Main result}
\label{sec:main}


We saw in \Cref{Prop:chi-necessary} that $\DpermGR$ is contained in the subcategory of cohomologically perfect complexes (\Cref{Def:coh-inv}). Our goal in this section is to prove the reverse inclusion.
Two ideas will be key: the ``cohomology'' comonad $\Infl_1^G\circ(-)^{\rmh G}$ on cohomologically perfect objects, and compactness arguments. To make both work at the same time, we lift that comonad to the homotopy category of injectives,~$\Komp(\Inj(\CRG))$, whose compact part is the bounded derived category. The proof of our main result being somewhat long, we prove several shorter lemmas.
Let us first set the notation.

\begin{Cons}
\label{Cons:Krause}%
We are going to assemble the following diagram via~\cite[\S\,6]{krause:stabX}
\begin{equation}
\label{eq:main}%
\vcenter{\xymatrix@H=1em@R=1em{
& \kern2em \Komp(\Inj(\CRG))
\ar@<1em>[dd]^-{\displaystyle\hat \cc^!}
\ar@{->>}[ld]|-{Q}
\\
\Der(\CRG)\kern1em
\ar@<.7em>@{ >->}[ru]^-{Q_\lambda}
\ar@<-.7em>@{ >->}[ru]_-{Q_{\rho}}
\ar[dd]^(.47){\displaystyle\cc^!}
\\
& \kern1.5em \Komp(\Inj(\CR))
\ar[uu]^(.482){\displaystyle\hat \cc_*}
\ar@{->>}[ld]|-{Q}
\\
\Der(\CR)\kern1em
\ar@<.7em>@{ >->}[ru]^-{Q_\lambda}
\ar@<-.7em>@{ >->}[ru]_-{Q_{\rho}}
\ar@<1em>[uu]^-{\displaystyle\cc_*}
}}
\end{equation}

We already encountered the slanted arrows~$Q_{\lambda}\adj Q\adj Q_{\rho}$ in the recollement~\eqref{eq:sing-recollement}.
The left-hand vertical arrows $\cc_*\adj\cc^!$ are simply a shorthand for~\eqref{eq:Infl-adjoints}:
\[
\cc_*:=\Infl_1^G\qquadtext{and}\cc^!:=(-)^{\rmh G}.
\]
There are several reasons for this notation. First, it is lighter in formulas involving iterated compositions. Second, it evokes the algebro-geometric notation $\cc^*\adj \cc_*\adj \cc^!$ for an imaginary closed immersion $\cc\colon\Spec(\CR)\hook\Spec(\CRG)$ -- that actually makes sense if $G$ is abelian. (And we do have a left adjoint $\cc^*$ too, namely the left-derived functor of $G$-orbits~$(-)_{\rmh G}$.) Finally, it allows for a simple notation at the level of $\Komp(\Inj)$, namely the yet-to-be-explained~$\hat\cc_*\adj\hat\cc^!$ on the right-hand side of~\eqref{eq:main}.

For this, we apply \cite[\S\,6]{krause:stabX} to the \emph{exact} functor (denoted~$F$ in \loccit) $\Infl_1^G\colon\MMod{\CR}\to\MMod{\CRG}$. Its right adjoint~$(-)^G\colon \MMod{\CRG}\to \MMod{\CR}$ preserves injectives and our $\hat\cc^!\colon \Komp(\Inj(\CRG))\to \Komp(\Inj(\CR))$ is simply $(-)^G$ degreewise.
Its left adjoint~$\hat\cc_*\colon \Komp(\Inj(\CR))\to \Komp(\Inj(\CRG))$ is more subtle than just inflation. It is Krause's construction, namely $\hat\cc_*$ is defined as the composite
\[
\hat\cc_*\colon \Komp(\Inj(\CR))\xinto{\ J\ }\Komp(\MMod{\CR})\xto{\ \Infl_1^G\ }\Komp(\MMod{\CRG})\xonto{\ J_{\lambda}\ }\Komp(\Inj(\CRG)),
\]
where $J\colon \Komp(\Inj)\into\Komp(\Mod)$ is the inclusion and~$J_{\lambda}\colon \Komp(\Mod)\onto\Komp(\Inj)$ its left adjoint, as in~\eqref{eq:J-adjoints}. Using that $J$ is fully faithful, it is easy to see that $\hat\cc_*\adj\hat\cc^!$. (Although we had a derived left adjoint $\cc^*\adj\cc_*$ there is no $\hat\cc^*\adj\hat\cc_*$ on~$\Komp(\Inj)$.)

By \cite[Lemma~6.3]{krause:stabX}, since inflation is exact, we have
\begin{equation}
\label{eq:Q-c_*}%
Q\circ \hat \cc_*\cong\cc_*\circ Q\colon \Komp(\Inj(\CR))\to \Der(\CRG).
\end{equation}
From this we deduce, by taking right adjoints, that
\begin{equation}
\label{eq:c^!-Qr}%
\hat\cc^!\circ Q_{\rho}\cong Q_{\rho}\circ\cc^!.
\end{equation}

Note that since the functor $(-)^G\colon \MMod{\CRG}\to \MMod{\CR}$ preserves coproducts, so does the induced~$\hat\cc^!$ on $\Komp(\Inj)$. Thus its left adjoint preserves compacts:
\begin{equation}
\label{eq:hat-c_*-compacts}%
\hat \cc_*(\Komp(\Inj(\CR))^c)\subseteq \Komp(\Inj(\CRG))^c.
\end{equation}
\end{Cons}

\begin{Rem}
\label{Rem:R-action}%
On every $Y\in\Komp(\Inj(\CRG))$ the comonad $\hat\cc_*\hat\cc^!$ equals by construction
\[
\hat\cc_*\hat\cc^!(Y)=J_{\lambda}\,\Infl_1^G J\,\hat\cc^!(Y)=J_{\lambda}\Infl_1^G(J(Y)^{G})\cong J_{\lambda}\,\ihom_{\CRG}(\CR,Y)
\]
where $\ihom_{\CRG}(\CR,Y)$ has trivial $G$-action.
This leads us to bimodule actions:
\end{Rem}
\begin{Lem}
\label{lem:Db-acts-on-K_-(Inj)}%
There is an action of the bounded derived category of $\CR(G\times G\op)$-modules on~$\Komp_-(\Inj(\CRG))$, in the form of a well-defined bi-exact functor
\[
[-,-]\colon \Db(\CR(G\times G\op))\op\times \Komp_-(\Inj(\CRG))\to \Komp_-(\Inj(\CRG))
\]
given by the formula $[L,Y]=J_{\lambda}(\ihom_{\CRG}(L,Y))$.
\end{Lem}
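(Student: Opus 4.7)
The plan is to verify in sequence three claims packed into the lemma: that the formula lands in $\Komp_-(\Inj(\CRG))$; that it descends in the first variable to a functor on~$\Db(\CR(G\times G\op))$; and that it is bi-exact.

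For the first claim, I would unpack the module structure: a bimodule $L$ is a left $\CRG$-module (via the left action) equipped with a commuting right $G$-action, so taking $\CRG$-linear maps with respect to the left action produces $\ihom_{\CRG}(L,Y)$, on which the right $G$-action on $L$ induces the claimed residual $\CRG$-module structure. Boundedness is then a straightforward degree count: the degree-$m$ component $\prod_q \Hom_{\CRG}(L_q, Y_{q+m})$ is a finite product since $L\in\Db$, whose factors vanish for $m\gg 0$ because $Y\in\Komp_-(\Inj(\CRG))$. Thus $\ihom_{\CRG}(L,Y)\in\Komp_-(\CRG)$, and \Cref{Lem:resolutions}\eqref{it:resolution-Jl}--\eqref{it:resolution-Qr=Jl} tell us that $J_{\lambda}$ sends left-bounded complexes to left-bounded injective resolutions, placing $[L,Y]$ in $\Komp_-(\Inj(\CRG))$.

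For the second claim, the crux is that a left-bounded complex of injectives is K-injective, so $Y$ is K-injective in $\Komp(\CRG)$. Therefore $\ihom_{\CRG}(-,Y)$ takes quasi-isomorphisms to quasi-isomorphisms. It remains to argue that $J_{\lambda}$ sends quasi-isomorphisms between objects of $\Komp_-(\CRG)$ to isomorphisms in $\Komp(\Inj(\CRG))$: from $Q\circ J_{\lambda}\cong Q^{\scriptscriptstyle+}$ (recorded in the proof of~\Cref{Lem:resolutions}), $J_{\lambda}$ of a quasi-isomorphism becomes an isomorphism in $\Der(\CRG)$; but the composite $\Komp_-(\Inj(\CRG))\into\Komp(\Inj(\CRG))\xonto{Q}\Der(\CRG)$ is fully faithful since left-bounded complexes of injectives are K-injective. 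Hence $J_{\lambda}$ already inverts such quasi-isomorphisms on the nose, and $[-,Y]$ descends to a functor on $\Db(\CR(G\times G\op))\op$.

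For bi-exactness, the bifunctor $\ihom_{\CRG}(-,-)$ preserves mapping cones in each variable---a purely additive fact---inducing a triangulated bifunctor on homotopy categories; via K-injectivity of $Y$, this descends in the first variable to a triangulated functor on $\Db(\CR(G\times G\op))\op$. Post-composing with the triangulated left adjoint $J_\lambda$ then delivers bi-exactness valued in $\Komp_-(\Inj(\CRG))$. The main obstacle I anticipate is the comparison argument for well-definedness in the first variable: confirming that $J_{\lambda}$ promotes quasi-isomorphisms between left-bounded complexes to genuine isomorphisms in $\Komp(\Inj(\CRG))$, combining K-injectivity of $Y$ with the identification $Q\circ J_\lambda\cong Q^{\scriptscriptstyle+}$; everything else reduces to bookkeeping of the residual bimodule action and degree counts.
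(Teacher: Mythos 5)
Your proposal is correct and follows essentially the same route as the paper's: unpack the bimodule structure, use boundedness of $L$ together with left-boundedness of $Y$ to land in $\Komp_-$, and apply $J_\lambda$ via \Cref{Lem:resolutions}\eqref{it:resolution-Jl}. The only cosmetic difference is in the descent step for the first variable: you argue that $J_\lambda\circ\ihom_{\CRG}(-,Y)$ inverts quasi-isomorphisms (via K-injectivity of $Y$ and full faithfulness of $Q$ on $\Komp_-(\Inj(\CRG))$), whereas the paper shows it kills acyclic $L$ by directly computing $\Hm_n\ihom_{\CRG}(L,Y)=\Hom_{\Komp(\CRG)}(L[n],Y)=0$ and invoking \Cref{Lem:resolutions}\eqref{it:resolution-Jl} once more --- the two formulations are logically equivalent and rest on the same underlying fact.
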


\begin{proof}
At the level of module categories, there is an action
\[
\ihom_{\CRG}(-,-)\colon\MMod{\CR(G\times G\op)}\op\times\MMod{\CRG}\to\MMod{\CRG}
\]
which takes $(L,Y)$ to the abelian group~$\Hom_{\CRG}(L,Y)$ built by viewing~$L$ as a left $\CRG$-module via its left $G$-action, and then making the output $\Hom_{\CRG}(L,Y)$ into a left~$G$-module $\ihom_{\CRG}(L,Y)$ by using the `remaining' right $G$-action on~$L$. Being additive in both variables, this passes to a bi-exact functor on homotopy categories
\begin{equation}
\label{eq:bimodules-action-underived}%
\ihom_{\CRG}(-,-)\colon\Komp(\CR(G\times G\op))\op\times\Komp(\CRG)\to\Komp(\CRG)
\end{equation}
(by totalizing via $\prod$, which is irrelevant in our bounded case).
This yields
\[
[-,-]:\Kb(\CR(G\times G\op))\op\times\Komp_{-}(\Inj(\CRG))\xto{(\ref{eq:bimodules-action-underived})}\Komp_-(\CRG)\xto{J_{\lambda}}\Komp_-(\Inj(\CRG)).
\]
The preservation of left-boundedness by~$J_{\lambda}$ is \Cref{Lem:resolutions}\,\eqref{it:resolution-Jl}.
To show that this descends to the derived category in the first variable, let $L\in\Kb(\CR(G\times G\op))$ be acyclic and let $Y\in\Komp_{-}(\Inj(\CRG))$, and let us show that $J_{\lambda}(\ihom_{\CRG}(L,Y))=0$.
By \Cref{Lem:resolutions}\,\eqref{it:resolution-Jl} again, it suffices to show that $\ihom_{\CRG}(L,Y)$ is acyclic.
But $\Hm_n\ihom_{\CRG}(L,Y)=\Hom_{\Komp(\CRG)}(L[n],Y)$ vanishes since $Y$ is a left-bounded complex of injectives and $L$ is acyclic (as complex of $\CRG$-modules as well).
\end{proof}

\begin{Rem}
\label{Rem:RG-action}%
Each object $L$ in $\Db(\CR(G\times G\op))$ thus defines an exact endofunctor
\[
[L,-]\colon \Komp_-(\Inj(\CRG))\to \Komp_-(\Inj(\CRG)).
\]
For instance, $[\CRG,-]\cong\Id$ whereas $[\CR,-]\cong \hat\cc_*\hat\cc^!$ is our comonad, by \Cref{Rem:R-action}.
We use this to show that some~$Y\in\Komp_-(\Inj(\CRG))$ can be recovered from~$\hat\cc_*\hat\cc^!(Y)$.
\end{Rem}

\begin{Lem}
\label{Lem:conservative-p-gp}%
Let $G$ be a finite $p$-group and $Y\in\Komp_-(\Inj(\CRG))$ such that $p^n\cdot\id_{Y}=0$ for~$n\gg0$. Then $Y$ belongs to~$\thick(\hat\cc_*\hat\cc^!(Y))$ in $\Komp_-(\Inj(\CRG))$.
\end{Lem}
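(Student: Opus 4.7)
The plan is to induct on $|G|$. The base case $G=1$ is immediate since $\hat\cc_*$ and $\hat\cc^!$ are then identity functors. For $|G|>1$, pick a central subgroup $N\le G$ of order $p$. The factorization $(-)^{\rmh G}=((-)^{\rmh N})^{\rmh G/N}$ induces a factorization of the comonad $\hat\cc_*\hat\cc^!$ as an inner comonad $\hat e_*\hat e^!$ for $N$ (acting on $\Komp_-(\Inj(\CRG))$) composed with an outer comonad for $G/N$ (acting on $\Komp_-(\Inj(\CR(G/N)))$), both constructed as in \Cref{Cons:Krause}. Applying the inductive hypothesis to the outer comonad at the object $\hat e^!(Y)=Y^N$ (which still satisfies $p^n\id=0$) and propagating through the exact functor $\hat e_*$ yields $\hat e_*\hat e^!(Y)\in\thick(\hat\cc_*\hat\cc^!(Y))$. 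So it suffices to prove $Y\in\thick(\hat e_*\hat e^!(Y))$, reducing to the case of the central order-$p$ subgroup $N$.

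For this reduced problem, let $z$ generate $N$ and set $u:=z-1\in\CRG$, a central element. Since $(1+u)^p=z^p=1$ and $p\mid\binom{p}{k}$ for $0<k<p$, we have $u^p\in p\cdot\CRG$, so by iteration $u^{pn}\in p^n\cdot\CRG$, which annihilates $Y$ by hypothesis. Consequently $u$ acts nilpotently on $Y$, with nilpotency index at most $pn$.

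Now invoke the bimodule action $[-,-]$ of \Cref{lem:Db-acts-on-K_-(Inj)}. Centrality of $u$ makes each $u^k\CRG$ a two-sided ideal, and there is a natural identification $\CRG/u\CRG\cong\CR(G/N)$ as $\CR(G\times G\op)$-bimodules. A direct computation (in the spirit of \Cref{Rem:R-action}, using $\Infl_{G/N}^G\dashv(-)^N$) gives $[\CR(G/N),Y]\cong\hat e_*\hat e^!(Y)$. Using the exact triangles
\[
u^j\CRG/u^{j+1}\CRG \,\to\, \CRG/u^{j+1}\CRG \,\to\, \CRG/u^j\CRG
\]
in $\Db(\CR(G\times G\op))$, whose leftmost term is a bimodule quotient of $\CR(G/N)$ via multiplication by $u^j$, one shows by induction on $k$ that $[\CRG/u^k\CRG,Y]$ lies in $\thick(\hat e_*\hat e^!(Y))$ for every $k\ge1$.

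Finally, applying $[-,Y]$ to the triangle $\CRG\xto{u^{pn}}\CRG\to\CRG/u^{pn}\CRG$ and using that $u^{pn}\id_Y=0$, the induced triangle splits and exhibits $Y=[\CRG,Y]$ as a direct summand of $[\CRG/u^{pn}\CRG,Y]$, which lies in $\thick(\hat e_*\hat e^!(Y))$. Thus $Y\in\thick(\hat e_*\hat e^!(Y))\subseteq\thick(\hat\cc_*\hat\cc^!(Y))$, closing the induction. The main obstacle will be tracking the subquotients $u^j\CRG/u^{j+1}\CRG$ precisely at the bimodule level: since $u^p$ only lies in $p\CRG$ rather than vanishing, the $u$-adic and $p$-adic filtrations are entangled, and one needs to ensure that the kernels of the surjections $\CR(G/N)\twoheadrightarrow u^j\CRG/u^{j+1}\CRG$ do not escape the thick subcategory generated by $\CR(G/N)$ after applying $[-,Y]$.
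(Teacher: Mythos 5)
Your high-level strategy (induct on $|G|$, reduce to a central order-$p$ subgroup $N$, pass to $u=z-1$) is a genuinely different route from the paper's and is workable, but the way you try to close it has a real gap, which you yourself flag at the end. First, since $p>1$, the element $u=z-1$ is always a zero-divisor on $\CRG$ (its annihilator contains the norm $v=1+z+\cdots+z^{p-1}$), so there is no exact triangle $\CRG\xto{u^{pn}}\CRG\to\CRG/u^{pn}\CRG$ in $\Db(\CR(G\times G\op))$: the mapping cone of $u^{pn}$ has nonzero $H_1$ as well. For the same reason the subquotients $u^j\CRG/u^{j+1}\CRG$ are genuinely unidentified in your argument, and the inner induction on $k$ does not close as written. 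The concern you raise in the last sentence is exactly the gap; it is not a minor bookkeeping issue.

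The fix along your route is actually clean and avoids the $u$-adic filtration entirely: one computes that $\Ker(u\colon\CRG\to\CRG)=v\,\CRG$ and the central element $v$ induces a bimodule isomorphism $\CR(G/N)=\CRG/u\CRG\isoto v\,\CRG$, so \emph{both} homology bimodules of $\cone(u)$ are $\cong\CR(G/N)$ and hence $\cone(u)\in\thick(\CR(G/N))$ in $\Db(\CR(G\times G\op))$. By the octahedron, $\cone(u^{pn})\in\thick(\cone(u))$, so after applying $[-,Y]$ (using $u^{pn}\cdot\id_Y=0$) one gets $Y\in\thick([\CR(G/N),Y])=\thick(\hat e_*\hat e^!(Y))$, completing your induction. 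By contrast, the paper dispenses with the induction on $|G|$ altogether: it uses the octahedron on the single element $p$, so that what must be shown is $\cone(\CRG\xto{p}\CRG)\in\thick(\CR)$ in $\Db(\CR(G\times G\op))$; this is a statement about bimodules, independent of $Y$, which can be checked after base change $\bbZ\to\CR$ and then, via restriction-of-scalars from $\Fp$, reduced to the elementary fact that $\Fp G\in\thick(\Fp)$ over the $p$-group $G\times G\op$. That argument is shorter, does not require $N$ to be central or even to exist as a normal subgroup, and bypasses the comonad factorization machinery of your first paragraph.
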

\begin{proof}
As explained in \Cref{Rem:RG-action}, we need to show that in~$\Komp_-(\Inj(\CRG))$
\[
[\CRG,Y]\in\thick([\CR,Y]).
\]
Since $p^n\cdot Y=0$, we also have $p^n\cdot[\CRG,Y]=0$ and the octahedron axiom gives (see~\cite[Remark~2.27]{balmer-gallauer:resol-small})
\[
[\CRG,Y]\in\thick(\cone([\CRG,Y]\xto{p}[\CRG,Y]))=\thick([\cone(\CRG\xto{p}\CRG),Y])
\]
using exactness of~$[-,Y]$.
Hence it suffices to prove in~$\Db(\CR(G\times G\op))$ that
\[
\cone(\CRG\xto{p}\CRG)\in\thick(\CR).
\]
This last statement is independent of~$Y$.
By scalar extension along~$\bbZ\to \CR$ (\Cref{Rec:scalar-extension}), it suffices to prove that in~$\Db(\bbZ(G\times G\op))$
\[
\cone(\bbZ G\xto{p}\bbZ G)\in\thick(\bbZ).
\]
Consider the exact functor $i_*\colon \Db(\Fp(G\times G\op))\to \Db(\bbZ(G\times G\op))$ obtained by restriction-of-scalars. The above $\cone(\bbZ G\xto{p}\bbZ G)$ is nothing but~$i_*(\Fp G)$, and $i_*(\Fp)\cong\cone(\bbZ\xto{p}\bbZ)$ belongs to~$\thick(\bbZ)$. So we are reduced to show that $\Fp G\in\thick(\Fp)$ in~$\Db(\Fp(G\times G\op))$, which is clear since $G\times G\op$ is also a $p$-group.
\end{proof}

\begin{Lem}
\label{Lem:main-prepa}%
Let $G$ be a $p$-group. Let $X\in\Db(\CRG)$ be $p$-torsion (\ie $p^n\cdot X=0$ for $n\gg 0$) and $G$-cohomologically perfect. Then $X$ belongs to~$\thick(\CR)$.
\end{Lem}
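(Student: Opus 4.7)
The plan is to transfer the problem to the compactly generated ambient category $\Komp(\Inj(\CRG))$, where \Cref{Lem:conservative-p-gp} can be combined with the singularity recollement. Lift $X$ to $Y := Q_{\rho}(X)$. By \Cref{Lem:resolutions}\eqref{it:resolution-Qr}, $Y$ lies in $\Komp_{-}(\Inj(\CRG))$, and in fact it is compact, since $Q_{\rho}$ restricts to the inverse of the equivalence $Q\colon\Komp(\Inj(\CRG))^c\isoto\Db(\CRG)$ recalled in \Cref{Rec:singularity}. Functoriality of $Q_{\rho}$ transports $p^n\cdot\id_X=0$ to $p^n\cdot\id_Y=0$, so \Cref{Lem:conservative-p-gp} places
\[
Y\in\thick\bigl(\hat\cc_{*}\hat\cc^{!}(Y)\bigr)
\qquad\text{in }\Komp_{-}(\Inj(\CRG)).
\]

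The cohomological perfection hypothesis now enters through the right-hand side. By~\eqref{eq:c^!-Qr} we have $\hat\cc^{!}(Y)=\hat\cc^{!}Q_{\rho}(X)\cong Q_{\rho}(X^{\rmh G})$, while $\CInvGR(X)=\StabD(X^{\rmh G})=0$ puts $Q_{\rho}(X^{\rmh G})$ in the localizing subcategory $\Loc(Q_{\rho}(\CR))\subseteq\Komp(\Inj(\CR))$ by \Cref{Lem:Ker-sing}\eqref{it:Ker-sing-ii}. Being a left adjoint, $\hat\cc_{*}$ preserves coproducts, so
\[
\hat\cc_{*}\hat\cc^{!}(Y)\cong\hat\cc_{*}Q_{\rho}(X^{\rmh G})\in\Loc\bigl(\hat\cc_{*}Q_{\rho}(\CR)\bigr),
\]
and combining with the previous step yields $Y\in\Loc(\hat\cc_{*}Q_{\rho}(\CR))$.

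The crux is then a compactness argument: the generator $\hat\cc_{*}Q_{\rho}(\CR)$ is compact in $\Komp(\Inj(\CRG))$, since $Q_{\rho}(\CR)$ is compact (as $\CR$ is perfect over itself) and $\hat\cc_{*}$ preserves compacts by~\eqref{eq:hat-c_*-compacts}. Because $Y$ is also compact, Neeman's theorem — the same trick invoked in the proof of \Cref{Lem:Ker-sing} — upgrades the localizing containment to $Y\in\thick(\hat\cc_{*}Q_{\rho}(\CR))$. Applying $Q$, using $QQ_{\rho}\cong\Id$ together with~\eqref{eq:Q-c_*}, we compute $Q(\hat\cc_{*}Q_{\rho}(\CR))\cong\cc_{*}(\CR)=\CR$, and conclude $X=Q(Y)\in\thick(\CR)$ in $\Der(\CRG)$. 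Since a thick subcategory of $\Der(\CRG)$ generated by a bounded object stays within $\Db(\CRG)$ (summands of bounded complexes remaining bounded), this is equivalently the claim in $\Db(\CRG)$.

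The main obstacle is the passage from $\Loc$ to $\thick$: the lift to $\Komp(\Inj(\CRG))$ is made precisely to enable this compactness-to-thick upgrade, and the whole argument would collapse in $\Db(\CRG)$, which has no coproducts and no access to Krause's comonad $\hat\cc_{*}\hat\cc^{!}$. Everything else is book-keeping linking \Cref{Lem:conservative-p-gp}, \eqref{eq:c^!-Qr} and \Cref{Lem:Ker-sing}.
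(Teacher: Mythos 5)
Your proposal is correct and follows essentially the same route as the paper's proof: lift to $Q_\rho(X)$ in $\Komp(\Inj(\CRG))$, combine \Cref{Lem:Ker-sing}, the compatibility~\eqref{eq:c^!-Qr}, and \Cref{Lem:conservative-p-gp} to land in $\Loc(\hat\cc_*Q_\rho(\CR))$, upgrade to $\thick$ via compactness and Neeman, and descend along~$Q$. The only cosmetic difference is the order in which you invoke \Cref{Lem:conservative-p-gp} versus the cohomological-perfection hypothesis, which does not affect the logic.
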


\begin{proof}
By \Cref{Lem:Ker-sing}, the assumption $0=\CInvGR(X)=I_\lambda\, Q_{\rho}\,\cc^!(X)$ implies that
\begin{equation}
\label{eq:aux-Qr(X)}%
Q_{\rho}\, \cc^!(X)\in\Loc{Q_{\rho}(\CR)}
\end{equation}
in $\Komp(\Inj(\CR))$. Applying to this relation the (coproduct-preserving) left adjoint $\hat\cc_*\colon\Komp(\Inj(\CR))\to \Komp(\Inj(\CRG))$ of~\eqref{eq:main}, we obtain in~$\Komp(\Inj(\CRG))$
\[
\hat \cc_*\, \hat\cc^!\, Q_{\rho}(X)\underset{\textrm{(\ref{eq:c^!-Qr})}}\cong\hat \cc_*\, Q_{\rho}\, \cc^!(X)\underset{\textrm{(\ref{eq:aux-Qr(X)})}}\in\Loc{\hat \cc_*Q_{\rho}(\CR)}.
\]
Hence by \Cref{Lem:conservative-p-gp} with $Y=Q_{\rho}(X)$, which is $p$-torsion since~$X$ is, we have
\begin{equation}
\label{eq:aux-main-1}%
Q_{\rho}(X)\in\thick(\hat\cc_*\hat\cc^!Q_{\rho}(X))\subseteq\Loc{\hat \cc_*Q_{\rho}(\CR)}
\end{equation}
in $\Komp(\Inj(\CRG))$. Now $Q_{\rho}(X)$ is compact in~$\Komp(\Inj(\CRG))$ by~\Cref{Rec:singularity}, and $\hat\cc_*Q_{\rho}(\CR)$ is compact because $Q_{\rho}(\CR)$ is and because $\hat\cc_*$ preserves compacts~\eqref{eq:hat-c_*-compacts}. So, by \cite[Lemma~2.2]{neeman:thomason-localization}, we can replace `Loc' by `thick' in~\eqref{eq:aux-main-1}, giving us the relation
\[
Q_{\rho}(X)\in\thick\big(\hat\cc_*Q_{\rho}(\CR)\big)
\]
in~$\Komp(\Inj(\CRG))$. Applying $Q\colon \Komp(\Inj(\CRG))\onto\Der(\CRG)$ and $QQ_{\rho}\cong\Id$, we get
\[
X\in \thick\big(Q \hat \cc_* Q_{\rho}(\CR)\big)\overset{\textrm{(\ref{eq:Q-c_*})}}{\ =\ }\thick\big(\cc_* Q Q_{\rho}(\CR)\big)= \thick\big(\cc_*(\CR)\big).
\]
Of course, $\cc_*(\CR)$ is just $\CR$ with trivial $G$-action, viewed in~$\Db(\CRG)$.
\end{proof}

\begin{Lem}
\label{Lem:main-p-gp}%
Let $G$ be a $p$-group and $X\in\Db(\CRG)$. The following are equivalent:
\begin{enumerate}[\rm(i)]
\item
\label{it:main-X-in-A(G)}%
$X\in\DpermGR$; see~\eqref{eq:Dperm}.
\item
\label{it:main-X-coh-perf}%
$X$ is cohomologically perfect (\Cref{Def:coh-inv}).
\item
\label{it:main-X-G/1-coh-perf}%
$X$ is $G$-cohomologically perfect and $\CR$-perfect (\Cref{Rec:R-perfect}).
\end{enumerate}
\end{Lem}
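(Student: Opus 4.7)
The plan is to treat the three implications in \Cref{Lem:main-p-gp} separately. Implication $\eqref{it:main-X-in-A(G)}\Rightarrow\eqref{it:main-X-coh-perf}$ is \Cref{Prop:chi-necessary}, and $\eqref{it:main-X-coh-perf}\Rightarrow\eqref{it:main-X-G/1-coh-perf}$ is immediate, as $\CR$-perfectness is precisely $1$-cohomological perfectness by \Cref{Exa:R-perfect}. The substantive content is $\eqref{it:main-X-G/1-coh-perf}\Rightarrow\eqref{it:main-X-in-A(G)}$. Assuming $X\in\Db(\CRG)$ is $\CR$-perfect and $G$-cohomologically perfect, I would work in the Verdier quotient $\Db(\CRG)/\DpermGR$ and show there that the endomorphism $p^n\cdot\id_X$ (with $p^n=|G|$) is simultaneously zero and an isomorphism, forcing $\id_X=0$ in the quotient and hence $X\in\DpermGR$.

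For the \emph{vanishing} of $p^n\cdot\id_X$ in the quotient, I would use the $\CRG$-linear norm map $\CR\xto{N}\CRG$ (sending $1$ to $\sum_{g\in G}g$) and augmentation $\CRG\xto{\epsilon}\CR$, whose composition equals $p^n\cdot\id_\CR$. Tensoring over $\CR$ with $X$ (diagonal $G$-action on the middle term) yields a factorization $X\to X\otimes_\CR\CRG\to X$ of $p^n\cdot\id_X$ in $\Db(\CRG)$. The middle object is isomorphic to $\Ind^G_1\Res^G_1 X$, and since $X$ is $\CR$-perfect we have $\Res^G_1 X\in\Dperf(\CR)=\thick(\CR)$; consequently $X\otimes_\CR\CRG\in\thick(\CRG)\subseteq\DpermGR$, and the factorization shows that $p^n\cdot\id_X=0$ in $\Db(\CRG)/\DpermGR$.

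For the \emph{invertibility} of $p^n\cdot\id_X$ in the quotient, I would set $C:=\cone(X\xto{p^n}X)\in\Db(\CRG)$. Using the resolution $\bbZ\xto{p^n}\bbZ$ of $\bbZ/p^n$, one identifies $C\cong X\otimes^{\rmL}_\bbZ\bbZ/p^n$, from which $p^n\cdot\id_C=0$ in $\Db(\CRG)$, i.e., $C$ is $p^n$-torsion. Since $\CInvGR$ is a triangulated functor and $\CInvGR(X)=0$, also $\CInvGR(C)=0$, so $C$ is $G$-cohomologically perfect. \Cref{Lem:main-prepa} then places $C$ in $\thick(\CR)\subseteq\DpermGR$, and the defining triangle $X\xto{p^n}X\to C\to\Sigma X$ makes $p^n\cdot\id_X$ an isomorphism modulo $\DpermGR$. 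Combined with the previous paragraph, this gives $\id_X=0$ in $\Db(\CRG)/\DpermGR$, finishing the proof. The only subtle point I foresee is the derived-category identification $\cone(p^n\cdot\id_X)\cong X\otimes^{\rmL}_\bbZ\bbZ/p^n$, which upgrades ``$p^n$ annihilates $\bbZ/p^n$'' to ``$p^n\cdot\id_C$ is null in $\Db(\CRG)$''; the rest is a clean assembly of \Cref{Lem:main-prepa}, the Frobenius-type identity $\epsilon\circ N=p^n\cdot\id_\CR$, and the observation that $\Ind^G_1\Dperf(\CR)\subseteq\DpermGR$.
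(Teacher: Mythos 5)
Your proof is correct, and the substantive implication $\eqref{it:main-X-G/1-coh-perf}\Rightarrow\eqref{it:main-X-in-A(G)}$ follows a genuinely different route from the paper's. The paper invokes the external result~\cite[Corollary~2.26]{balmer-gallauer:resol-small}, which directly produces an exact triangle $P\to X\oplus\Sigma X\to T\to\Sigma P$ with $P$ a bounded complex of permutation modules and $T$ a $p$-torsion object; it then feeds $T$ into \Cref{Lem:main-prepa} and closes by thickness of~$\DpermGR$. You instead work in the Verdier quotient $\Db(\CRG)/\DpermGR$ and show that $p^n\cdot\id_X$ (with $p^n=|G|$) is there simultaneously zero and invertible. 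The vanishing comes from your norm--augmentation factorization through $X\otimes_\CR\CRG\cong\Ind^G_1\Res^G_1 X$, which lies in $\Dperf(\CRG)\subseteq\DpermGR$ by $\CR$-perfectness; the invertibility comes from $C:=\cone(p^n\cdot\id_X)\cong X\otimes^{\rmL}_\bbZ\bbZ/p^n$, which is $p$-torsion and $G$-cohomologically perfect, hence lies in $\thick(\CR)\subseteq\DpermGR$ by \Cref{Lem:main-prepa}. Both arguments ultimately funnel into \Cref{Lem:main-prepa} by producing a $p$-torsion, $G$-cohomologically perfect object of~$\Db(\CRG)$; yours manufactures it by hand as the cone of multiplication by $p^n$, thereby avoiding the citation to~\cite[Cor.~2.26]{balmer-gallauer:resol-small} at the modest cost of the Frobenius-type identity $\epsilon\circ N=p^n\cdot\id_\CR$ and the Koszul identification of the cone, which you correctly flag and handle.
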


\begin{proof}
The implication \eqref{it:main-X-in-A(G)}$\Rightarrow$\eqref{it:main-X-coh-perf} is \Cref{Prop:chi-necessary}, and
the implication \eqref{it:main-X-coh-perf}$\Rightarrow$\eqref{it:main-X-G/1-coh-perf} is trivial by \Cref{Def:coh-inv}.
For the implication \eqref{it:main-X-G/1-coh-perf}$\Rightarrow$\eqref{it:main-X-in-A(G)} suppose that $\CInvGR(X)=0$ and $X$ is $\CR$-perfect.
By \cite[Corollary~2.26]{balmer-gallauer:resol-small}, there exists an exact triangle in~$\Db(\CRG)$
\[
P\to X \oplus \Sigma X \to T\to\Sigma P
\]
where $P$ is a bounded complex of permutation modules (and therefore belongs to~$\DpermGR$)
and where $T\in\Db(\CRG)$ is $p$-torsion.
Since~$P$ and~$X$ are $G$-cohomologically perfect so is~$T$.
Then \Cref{Lem:main-prepa} tells us that $T\in\thick(\CR)$ in~$\Db(\CRG)$. As $\CR\in\DpermGR$ we get $X\in\DpermGR$ as well.
\end{proof}

\begin{Rem}
\label{Rem:coh-perfect}%
For $G$ a $p$-group the equivalence \eqref{it:main-X-coh-perf}$\Leftrightarrow$\eqref{it:main-X-G/1-coh-perf}
in~\Cref{Lem:main-p-gp} shows that $G$-cohomological perfection together with $\CR$-perfection does imply $H$-cohomological perfection for all~$H\le G$. This is sharp by \Cref{Exa:coh-perfect-not-1-perfect} and \Cref{Exa:1-perf-not-A(G)}.
\end{Rem}

Here is the main result. The category $\DpermGR=\Img(\bar{\Permtomod})$ can be found in~\eqref{eq:Dperm} and in \Cref{Rec:image-Fbar}. The invariant~$\CInvHR$ is in \Cref{Def:coh-inv}.

\begin{Thm}
\label{Thm:main}%
Let $G$ be a finite group and~$\CR$ a commutative noetherian ring.
Let $X\in\Db(\CRG)$ be a bounded complex. The following properties of~$X$ are equivalent:
\begin{enumerate}[\rm(i)]
\item
\label{it:Acat}%
The complex $X$ belongs to~$\DpermGR$.
\smallbreak
\item
\label{it:coh-perf}%
It is cohomologically perfect: $\CInvHR(X)=0$ in~$\Dsing(\CR)$ for all subgroups~$H\le G$.
\smallbreak
\item
\label{it:coh-perf-Sylow}%
It is $\CR$-perfect, \ie the underlying complex $\Res^G_1(X)\in\Db(\CR)$ is perfect, and it is $H$-cohomologically perfect, $\CInvHR(X)=0$, for every Sylow subgroup~$H\le G$.
\end{enumerate}
\end{Thm}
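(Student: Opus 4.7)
The plan is to reduce the theorem to the $p$-group case (\Cref{Lem:main-p-gp}) via the Sylow reduction recalled in \Cref{Rec:ImF-stability}. Almost all of the heavy lifting has already been done; what remains is essentially bookkeeping of definitions and restriction functors.

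First, the implication \eqref{it:Acat}$\Rightarrow$\eqref{it:coh-perf} is an immediate citation of \Cref{Prop:chi-necessary}. The implication \eqref{it:coh-perf}$\Rightarrow$\eqref{it:coh-perf-Sylow} is trivial: $\CR$-perfection of $X$ follows from $\CInv{1}{\CR}(X)=0$ via \Cref{Exa:R-perfect}, and restricting the universal condition to Sylow subgroups preserves it.

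The content is the implication \eqref{it:coh-perf-Sylow}$\Rightarrow$\eqref{it:Acat}. Here I would proceed as follows. By \Cref{Rec:ImF-stability}, it suffices to prove $\Res^G_HX\in\Dperm{H}{\CR}$ for every Sylow subgroup $H\le G$. Since $H$ is a $p$-group, \Cref{Lem:main-p-gp} reduces this to verifying that $\Res^G_HX$ is both $\CR$-perfect and $H$-cohomologically perfect. For the first: $\CR$-perfection is invariant under restriction of the group, since $\Res^H_1\,\Res^G_H=\Res^G_1$. For the second: by the very \Cref{Def:coh-inv},
\[
\CInv{H}{\CR}(\Res^G_HX)=\StabD_{\CR}\bigl((\Res^H_1\Res^G_HX)^{\rmh H}\bigr)=\StabD_{\CR}\bigl((\Res^G_HX)^{\rmh H}\bigr)=\CInvHR(X),
\]
which is zero by hypothesis on~$X$.

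The main obstacle is already behind us, namely the $p$-group case treated in \Cref{Lem:main-p-gp} (which itself rested on \Cref{Lem:main-prepa}, \Cref{Lem:conservative-p-gp}, and the construction in \Cref{Cons:Krause}). The only subtlety to be careful with in the present writeup is checking that the functor $\CInv{H}{\CR}$ really is insensitive to whether one regards $H$ as a subgroup of $G$ or as an ambient group in its own right, i.e.\ that restriction is already absorbed into the definition. This identification is what allows the Sylow hypothesis in~\eqref{it:coh-perf-Sylow} to feed directly into \Cref{Lem:main-p-gp} without any further manipulation.
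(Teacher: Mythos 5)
Your proof is correct and follows essentially the same route as the paper: (i)$\Rightarrow$(ii) by \Cref{Prop:chi-necessary}, (ii)$\Rightarrow$(iii) trivially, and (iii)$\Rightarrow$(i) by reducing via \Cref{Rec:ImF-stability} to Sylow subgroups and invoking the $p$-group case \Cref{Lem:main-p-gp}. You usefully spell out the small point the paper leaves implicit, namely that $\CInv{H}{\CR}(X)$ computed with $H\le G$ coincides with the $H$-cohomological singularity of $\Res^G_H X$ viewed with $H$ as ambient group, which is exactly what lets \Cref{Lem:main-p-gp} apply.
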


\begin{proof}
Implication (\ref{it:Acat})$\Rightarrow$(\ref{it:coh-perf}) is \Cref{Prop:chi-necessary}.
The implication (\ref{it:coh-perf})$\Rightarrow$(\ref{it:coh-perf-Sylow}) is trivial (\Cref{Def:coh-inv}).
If we assume~\eqref{it:coh-perf-Sylow}, then \Cref{Lem:main-p-gp} implies that $\Res^G_H(X)\in\Dperm{H}{\CR}$ for every Sylow subgroup~$H\le G$.
Since the indices of all Sylow subgroups are coprime, it is easy to deduce that $X\in\DpermGR$, see \cite[Corollary~2.21]{balmer-gallauer:resol-small}.
So the three conditions \eqref{it:Acat}$\Leftrightarrow$\eqref{it:coh-perf}$\Leftrightarrow$\eqref{it:coh-perf-Sylow} are equivalent.
\end{proof}

\begin{Rem}
As in \Cref{Rec:R-perfect}, it suffices to test~\eqref{it:coh-perf-Sylow} for the $p$-Sylow subgroups $H$ corresponding to primes~$p$ that are non-invertible on~$X$ (and in~$\CR$).

One can also replace~\eqref{it:coh-perf-Sylow} by only asking $X$ to be $E$-cohomologically perfect for every elementary abelian $p$-subgroup~$E\le G$. See~\Cref{Rec:ImF-stability}.
\end{Rem}

\begin{Rem}
\Cref{Thm:main-intro} of the Introduction follows from \Cref{Thm:main}.
\end{Rem}

\begin{Cor}
\label{Cor:coh-ring-mod}%
With $X\in\Db(\CRG)$ as in \Cref{Thm:main}, the conditions~\eqref{it:Acat}, \eqref{it:coh-perf}, \eqref{it:coh-perf-Sylow} are also equivalent to:
\begin{enumerate}[\rm(i)]
\setcounter{enumi}{3}
\item
\label{it:c^!(X)-thick-c^!(R)}%
In $\Der(\CR)$, we have $X^{\rmh H}\in\thick(\SET{\CR^{\rmh K}}{K\le G})$ for every subgroup~$H\le G$.
\end{enumerate}
\end{Cor}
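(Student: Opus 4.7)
The plan is to close the short loop (i) $\Rightarrow$ (iv) $\Rightarrow$ (ii), which combined with \Cref{Thm:main} yields the corollary.

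First I would handle (iv) $\Rightarrow$ (ii), which is nearly immediate from \Cref{Lem:chi-necessary}. That lemma gives $\StabD_{\CR}(\CR^{\rmh K}) = 0$ for every $K \le G$, and since the kernel of the triangulated functor $\StabD_{\CR}$ is a thick subcategory of $\Der(\CR)$, it must contain all of $\thick(\SET{\CR^{\rmh K}}{K \le G})$. Therefore any $X$ satisfying (iv) has $\CInvHR(X) = \StabD_{\CR}(X^{\rmh H}) = 0$ for each $H \le G$.

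For (i) $\Rightarrow$ (iv), my strategy is the standard reduction to generators. For each fixed $H \le G$, the class of $X \in \Db(\CRG)$ with $X^{\rmh H} \in \thick(\SET{\CR^{\rmh L}}{L \le G})$ is a thick subcategory of $\Db(\CRG)$, because $(-)^{\rmh H}\colon \Der(\CRG) \to \Der(\CR)$ is triangulated and the target is thick. Intersecting over all $H \le G$ (finitely many choices) preserves thickness, so it suffices to verify (iv) on the generators $\CR(G/K) \cong \Ind_K^G \CR$ of $\DpermGR$ in~\eqref{eq:Dperm}. For such an object, the Mackey formula gives
\[
\Res^G_H \Ind_K^G \CR \;\cong\; \bigoplus_{[g] \in H \backslash G / K} \Ind_{H \cap {}^g K}^H \CR,
\]
and then the first (non-derived, functorial) isomorphism of \Cref{Prop:CInv-Ind}, applied within the ambient group~$H$ rather than~$G$, identifies
\[
(\Ind_K^G \CR)^{\rmh H} \;\cong\; \bigoplus_{[g] \in H \backslash G / K} \CR^{\rmh (H \cap {}^g K)},
\]
which visibly lies in $\thick(\SET{\CR^{\rmh L}}{L \le G})$.

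No step is genuinely difficult; the only mild point to watch is to invoke the first (functorial) isomorphism of \Cref{Prop:CInv-Ind}, not the second, so that the computation stays in $\Der(\CR)$ and one does not prematurely collapse to $\Dsing(\CR)$.
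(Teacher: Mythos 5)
Your proof is correct and follows essentially the same route as the paper: both establish (iv)$\Rightarrow$(ii) via \Cref{Lem:chi-necessary} and the thickness of $\Ker(\StabD)$, and both establish (i)$\Rightarrow$(iv) by reducing to the generators $\CR(G/K)$ and invoking the Mackey formula together with \Cref{Prop:CInv-Ind}. The only cosmetic quibble is your phrase ``non-derived'' for the first isomorphism of \Cref{Prop:CInv-Ind} --- it is a derived-functor statement, and what you mean (correctly) is simply that it lives in $\Der(\CR)$ before composing with $\StabD$.
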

\begin{proof}
Let us denote by $\cat{J}:=\thick(\SET{\CR^{\rmh K}}{K\le G})$ the thick subcategory of~$\Der(\CR)$ appearing in~\eqref{it:c^!(X)-thick-c^!(R)}. For~\eqref{it:c^!(X)-thick-c^!(R)}$\To$\eqref{it:coh-perf}, note that $\StabD(\CR^{\rmh K})=\CInv{K}{\CR}(\CR)=0$ by \Cref{Lem:chi-necessary}. So $\cat{J}\subseteq\Ker(\StabD)$ and therefore $X^{\rmh H}\in\cat{J}$ implies $\CInv{H}{\CR}(X)=\StabD(X^{\rmh H})=0$. For~\eqref{it:Acat}$\To$\eqref{it:c^!(X)-thick-c^!(R)}, it is sufficient to prove that for all subgroups~$H,L\le G$ we have $(\CR(G/L))^{\rmh H}\in\cat{J}$. This follows from the Mackey formula and \Cref{Prop:CInv-Ind}.
\end{proof}

\begin{Rem}
\label{Rem:coh-ring-mod}%
The inflation functor $\cc_*:\Der(\CR)\to\Der(\CRG)$ is monoidal and its right adjoint $\cc^!=(-)^{\rmh G}:\Der(\CRG)\to\Der(\CR)$ is therefore lax monoidal.
In particular, $\cc^!\cc_*(\unit)=\CR^{\rmh G}$ is a ring object, namely the `cohomology ring' of $G$ with coefficients in $\CR$, and every object $X\in\Der(\CRG)$ gives rise to a module $X^{\rmh G}$ over this ring.

With this in mind, and the fact that for every ring~$\Lambda$ we have~$\Dperf(\Lambda)=\thick(\Lambda)$, the terminology `cohomologically perfect' of \Cref{Def:coh-inv} is somewhat justified by the equivalent formulation given in part~\eqref{it:c^!(X)-thick-c^!(R)} of \Cref{Cor:coh-ring-mod}.
\end{Rem}

\begin{Rem}
\label{Rem:DPerm}%
Neeman's Localization Theorem~\cite[Theorem~2.1]{neeman:thomason-localization} suggests that the equivalence~\eqref{eq:Kb/Kbac=Dperm} is the compact tip of an iceberg.
To describe this iceberg, we define the \emph{(big) derived category of permutation modules}
\[
\DPermGR:=\Loc{\perm(G;\CR)}=\Loc{\SET{\CR(G/H)}{H\le G}}
\]
as the localizing subcategory of $\Komp(\MMod{\CRG})$ generated by permutation modules.
Its compact part is precisely $\Kb(\permGR^\natural)$.
It follows from \Cref{Lem:resolutions}\,\eqref{it:resolution-Qr=Jl} that $J_{\lambda}(\CR(G/H))\cong Q_{\rho}(\CR(G/H))$ so that we obtain a coproduct-preserving and compact-preserving exact functor
\begin{equation}
\label{eq:Fplus}%
\Fplus:=(J_{\lambda})\restr{\DPermGR}\colon \DPermGR \to \ACAT{G}{\CR},
\end{equation}
where the latter is the localizing subcategory of $\Komp(\Inj(\CRG))$ generated by the $Q_{\rho}(\CR(G/H))$, $H\leq G$.
This~$\Fplus$ is a finite localization which extends beyond compacts the canonical functor~$\Permtomod$ of~\eqref{eq:Kb->Db}.
In particular, it induces an equivalence
\begin{equation*}
\label{eq:bar-F-plus}%
\barFplus:\frac{\DPermGR}{\Loc{\Kbac(\perm(G;\CR))}}\isoto\ACAT{G}{\CR}.
\end{equation*}
On compacts, this equivalence identifies with the equivalence~$\bar{\Permtomod}$ of~\eqref{eq:Kb/Kbac=Dperm}.
Note that if $\CR$ is regular, \eqref{eq:Fplus} exhibits Krause's homotopy category of injectives $\Komp(\Inj(\CRG))$ as a finite localization of~$\DPermGR$. (\footnote{\,We mention as a curiosity that for $\CR=\kk$ a field (or just self-injective) we have the inclusion $\Komp(\Inj(\kk G))\subseteq\DPerm(G;\kk)$ as localizing subcategories of $\Komp(\MMod{\CRG})$, and the finite localization $\Fplus$ is simply the left adjoint to that inclusion.})

Unfortunately, for general~$\CR$, we are unable to characterize the subcategory $\ACAT{G}{\CR}\subseteq\Komp(\Inj(\CRG))$ along the lines of \Cref{Thm:main}.
An immediate extension of that result is blocked because the cohomological singularity functor is not coproduct-preserving.
\end{Rem}

\begin{Rem}
The above definition of~$\DPermGR$ does not do justice to the big derived category of permutation modules.
We refer to the expository note~\cite{balmer-gallauer:Dperm} for a more conceptual approach.
There we also explain that $\DPerm(G;\CR)$ is equivalent to the derived category of cohomological $\CR$-linear Mackey functors on~$G$ and, after suitably extending to profinite groups, to the triangulated category of Artin motives over a field with absolute Galois group~$G$ and with coefficients in~$\CR$, in the sense of Voevodsky.
\end{Rem}



\end{document}